\newtheorem{theorem}{Theorem}[section]
\newtheorem{proposition}[theorem]{Proposition}
\newtheorem{remark}[theorem]{Remark}
\newcommand\redout{\bgroup\markoverwith{\textcolor{red}{\rule[.5ex]{2pt}{0.8pt}}}\ULon}
\newcommand{\coln}{\color{black}}
\def\trh{{\cal T}_{h}}
\def\disp{\displaystyle}
\def\a{{\boldsymbol a}}
\def\beq{\begin{equation}}
\def\eeq{\end{equation}}
\def\R{\mathbf{R}}
\def\P{\mathbb{P}}
\def\X{\mathbf{X}}
\begin{document}
\title{Least-squares driven stabilised finite element solution of advection-dominated flow problems}
\author{Tom\'as Chac\'on Rebollo \thanks{Dpto. EDAN\&IMUS, University of Seville, Campus de Reina Mercedes, 41012 Sevilla, Spain}, Daniel Franco Coronil \thanks{Dpto. EDAN, University of Seville, Campus de Reina Mercedes, 41012 Sevilla, Spain}}

 \maketitle
\begin{abstract}
In this article, we  address the solution of advection-dominated flow problems by stabilised methods, by means of least-squares computed stabilised coefficients. As main methodological tool, we introduce a data-driven off-line/on-line strategy to compute them with low computational cost. 

We compare the errors provided by the least-squares stabilised coefficients to those provided by several previously established stabilised coefficients within the solution of advection-diffusion and Navier-Stokes flows, on structured and un-structured grids, with and Lagrange Finite Elements up to third degree of interpolation. In all tested flows the least-squares stabilised coefficients provide quasi-optimal errors.

We conclude that the least-squares procedure is a rewarding procedure, worth to be applied to general stabilised solutions of general flow problems.

\vspace*{2ex}\par
\emph{Keywords:} Variational Multi-Scale; Advection-diffusion; Stabilisation; Least squares; Data driven.

\end{abstract}



\section{Introduction} \label{sec:intro}
Stabilised methods provide a general technique to treat the instabilities generated by the Galerkin discretisation of partial differential equations (PDEs). Such instabilities are due to low-order derivation terms appearing in the PDEs, that become dominant at discrete level when the discretisation parameters are not small enough. This generates spurious oscillation in the discrete solutions that therefore are unreliable for practical applications (see Hughes (cf. \cite{Hughes_1995, HughesStewart_1995, HughesFMQ_1998}).  The initial stabilised method is the SUPG (Streamline Upwind Petrov-Galerkin) method of Brooks and Hughes (see \cite{Brooks_1982}), that consists in adding an extra term to the Galerkin discretization to control a weak norm of the advection derivative. Several classes of stabilised methods followed the SUPG one (Galerkin-Least Squares, Adjoint Stabilised, Orthogonal Sub-Scales methods, among others). All of them consisted in adding residual-based extra terms to the Galerkin formulation to control the low-order terms that appear in the PDEs. These methods were successively applied to incompressible and compressible flow equations, additionally providing a further stabilisation of the pressure gradient discretisation (cf. \cite{Hughes_1995} for an overview).
\par
 A particular class of stabilised methods is the Variational Multi-Scale (VMS) one, based upon the modelling of the sub-grid scales effect on the resolved scales. This requires to approximately solve the small scale problem in terms of the resolved scales, and plugging the resolved approximate small scales in the resolved scales equation. This provides improved stable and acccurate solutions to many flow problems (cf.\cite{hug112, john0612, Chaconlibro}). 
  \par
Solving the sub-grid scales by approximate diagonalisation of the PDE operator acting on them leads to the Orthogonal Sub-Scales (OSS) method, introduced by Codina in \cite{codina12}. A further step within the diagonalization techniques is the use of spectral techniques, introduced in \cite{ChaconDia}, that leads to the spectral VMS method (cf. \cite{ChaconDia, solfergar}) .
\par 
A further simplified stabilised method is the term-by-term one, in which specific least-squares terms are added to stabilise each actual low-order operator term that could generate instabilities (Cf. \cite{Chacon, macaisatomas}). 

All stabilised methods, in addition to their structure, are characterised by by \lq \lq stabilised coe\-ffi\-cients" that govern the element-wise stabilisation strength of the additional terms. For flow equations, this strength depends on the relative size of diffusion and advection terms at element level. The accuracy, as well as the stabilising properties of the method, largely depend on the actual expression of these stabilised coefficients. {A number of stabilisation parameters have been considered so far in the literature. For 1D flows, there exist optimal coefficients that ensure that the discrete solutions coincides with the exact one at grid nodes (cf. \cite{christie,johnkno}). Codina introduced in \cite{codina12} a formula for stabilization coefficients for multi-dimensional flows based upon orthogonal sub-grid scales. This formula was extended to an anisotropic version considering the streamline length of the element by Codina and Colom\'es in \cite{Colomes2018}. Also, in \cite{Franca2000} Franca and Valentin had introduced an anisotropic stability coefficient, observing that it yields the best numerical results is computed using the largest streamline length of the element.  Such coefficient is only valid for negative source terms, it was extended to positive source terms by Hauke in \cite{Hauke2002}, and used afterwards, for instance, in \cite{Hauke2008}. 
\par
In the present paper, we afford the computation of the stabilised parameters by least-squares techniques for Lagrange finite element solutions of 1D and 2D advection-diffusion flows. We minimise the quadratic distance between the stabilised solution and the Lagrange interpolate of a reference solution at Lagrange interpolation nodes. This turns out to the 1D minimisation of a smooth convex functional, that is readily carried on by standard techniques. We introduce a data-driven off-line/on-line strategy to compute the stabilised coefficients, considered as functions of the non-dimensional parameters (actually, the directional grid P\'eclet numbers), that govern the flow at grid element level. In the off-line stage, the stabilised coefficients are computed at the nodes of grid of the directional grid P\'eclet numbers. In the on-line step the stabilised coefficients on each grid element are computed by a fast interpolation procedure of the pre-computed values at the grid P\'eclet number nodes, thus requiring quite reduced computation times.

We perform some numerical tests to compare the errors provided by the least-squares stabilised coefficients to those provided by known stabilised coefficients, with several advection-diffusion and Navier-Stokes flows, considering isotropic and anisotropic advection velocities, as well as isotropic and anisotropic grids, and $\P_1$, $\P_1$+Bubble, $\P_2$ and $\P_3$ finite elements.  We observe that in all tested flows the least-squares stabilised coefficients provide either the smallest errors, or error levels quite close to the smallest ones. 

The least-squares procedure to compute the stabilised coefficients is a general procedure, that can be applied to finite element, finite volume or spectral discretisations, as well as to more general (compressible, multi-phase, thermal, ...) fluid flows.  In despite of its need of a rather large amount of computation in the off-line procedure, it appears as a rewarding procedure, worth to be applied to general stabilised solutions of flow problems.
\par
The paper is outlined as follows. In Section 2 we describe the off-line/on-line strategy to compute the stabilised coefficients, while in Section 3 we describe the least-squares procedure to compute them. In Section 4 we present the numerical tests. We address some conclusions and perspectives of future work in Section 5.
}
%
%
\section{Off-line/on-line strategy to compute the stabilised coefficients} \label{se:least}
In this section we describe the off-line/on-line strategy that we follow to compute the stabilised coefficients. With this purpose, se consider the advection-diffusion equations,
\begin{equation}\label{WEAD}
\left \{\begin{array}{l}
\mbox{Find a passive scalar }u:\bar{\Omega}\mapsto \R \quad\mbox{such that}\\  
\begin{array}{rcl}
\a\cdot\nabla u+\nabla\cdot (\mu \,\nabla u)&=&f \,\,\mbox{in } \Omega,  
\\
u&=&0 \,\,\mbox{on } \partial\Omega
\end{array}
\end{array}
\right .
\end{equation}
where $\Omega \subset \R^d$  is a bounded domain (with $d=1$, $d=2$ or $3$); $\a \in L^\infty(\Omega)^d$ is the advection velocity that we assume to be divergence-free;  $\nabla \cdot \a=0$, $\mu \in L^\infty(\Omega)$ is the viscosity, that verifies $\mu(x)\ge \mu_0 >0$ a. e. in $\Omega$; and $f \in L^2(\Omega)$ isthe forcing terms. We consider homogeneous Dirichlet boundary conditions for simplicity although our methodology can readily be extended to Neumann or mixed boundary conditions. We consider the standard variational formulation of these equations, 
\begin{equation}\label{WEAD}
\left\{\begin{array}{l}
\mbox{Find }u\in H_0^1(\Omega) \quad\mbox{such that,}\\  \noalign{\smallskip}
(\a\cdot\nabla u,v)+(\mu \,\nabla u,\nabla v)=(f,v) \quad \forall v\in H_0^1(\Omega).\\  \noalign{\smallskip}
\end{array}\right.
\end{equation}
To establish the stabilised methods that we are considering, let us assume that $\Omega$ is either an interval (when $d=1$), polygon (when $d=2$) or polyhedron (when $d=3$), and consider a family of triangulations $\{\trh\}_{h>0}$ of $\Omega$, that we assume regular in the sense of Ciarlet \cite{Ciarlet}. We consider the conformal Lagrange finite element spaces 
$$
V_h^{(l)}=\{v_h \in C^0(\Omega)\,|\, {v_h}_{|_K} \in \P_l(K),\,\, \forall \, K \in \trh \,\},\,\, V_{0h}^{(l)}=V_h^{(l)} \cup H^1_0(\Omega),
$$
where $\P_l(K)$ denotes the space of polynomials of degree less or equal than $l$ defined on $K$. Let us consider the following discrete spaces for passive scalar, velocity and pressure, respectively:
$$
X_h=V_{0h}^{(l)},\quad \X_h^{(l)}=[V_{0h}^{(l)}]^d,\quad M_h= V_h^{(l)}.
$$
We initially consider the following stabilised discretisations of problem \eqref{WEAD},

Find $u_h \in X_h$ such that
\begin{equation} \label{disad}
a(u_h,v_h)+ s_h(P(u_h),Q(v_h))=  \langle \tilde{f},v_h\rangle  \quad \forall v_h \in X_h,
\end{equation}
where 
$$
a(u,v)= (\a\cdot\nabla u,v)+(\mu \,\nabla u,\nabla v),\quad \forall u,\, v \in H^1_0(\Omega);
$$
\begin{equation} \label{eseh}
s_h(\alpha,\beta)=\sum_{K \in \trh} \tau_K\, (\alpha,\beta)_K,\quad \forall \alpha,\, \beta \in L^2(\Omega),
\end{equation}
$$
\langle \tilde{f},v_h\rangle= ( f,v_h)  + s_h(f,Q(v_h)), \quad \forall v_h \in X_h,
$$
$$
P(u_h)_{|_K}=\a_{|_K}\cdot \nabla {u_h}_{|_K}-\nu\, \Delta {u_h}_{|_K},\quad
Q(v_h)_{|_K} =\a_{|_K}\nabla\cdot {u_h}_{|_K}+\varepsilon\,\nu\, \Delta {u_h}_{|_K},\,\, \forall K \in \trh,
$$
where $\varepsilon$ may take the values $-1$, $0$ or $1$, respectively corresponding to the Least-Squares, Streamline Upwind and Adjoint stabilised methods. We also consider the term-by-term stabilised method, that corresponds to
\begin{equation} \label{tbt}
P(u_h)_{|_K}=\a_{|_K}\cdot \nabla {u_h}_{|_K},\quad
Q(v_h)_{|_K} =\a_{|_K}\cdot \nabla{u_h}_{|_K},\,\, \forall K \in \trh,
\end{equation}
and
$$
\langle \tilde{f},v_h\rangle= ( f,v_h)  , \quad \forall v_h \in X_h. 
$$
In \eqref{eseh}, $\tau_K$ are the stabilised coefficients, that determine each actual stabilised method, once $P$ and $Q$ have been set. When $d=1$ and $\a$, $f$ are constant, there exists an optimal setting of the $\tau_K$ that ensures that $u_h$ coincides with $u$ at the Lagrange interpolation nodes (cf. \cite{christie,johnkno}). This is a consequence of the decomposition
$$
H^1(\Omega)= V_h^{(l)} \bigoplus_{K\in \trh} H^1_0(K).
$$
However this is no longer true in higher dimensions. In this case if the stabilised coefficients take into account the advection-dominated regime, it can be proved (cf. \cite{chacondomregalo}) that the seminorm of the advection derivative,
$$
\left (\sum_{K \in \trh} \tau_K\, \|\a\cdot \nabla {u_h}\|_{0,K}^2\right)^2
$$
is uniformly bounded independently of $\mu_0$ and $h$. We here afford to minimise the difference $u_h-u$ at the Lagrange interpolation nodes with an optimal choice of the stabilisation parameters.  

\subsection{Offline/online strategy}
We address the computation of universal formulas for the stabilised coefficients, by means of an offline/online strategy based upon the use of data-driven least squares techniques. Of course this could be done for each actual $\a$, $\mu$ and $f$ but this would yield an useless method. We start from the observation that by dimensional analysis, $\tau_K$ has the dimension of a time. If $h_K$ is the element diameter, then $\varphi_K=\displaystyle \frac{\|\a\|_K}{h_K}\, \tau_K$ is dimensionless, where $\|\a\|_K$ denotes some norm of the velocity $\a$ on element $K$. We assume that $\varphi_K$ only depends on the dimensionless parameters that determine the flow on element $K$, which are the direction element P\'eclet numbers,
$
\disp P_{iK}= \frac{\bar{a}_{i_K}\, h_K}{2 \mu_K},\,\, i=1,\cdots, d,
$
where $\bar{a}_{i_K}$ is the average of the i-th component of the velocity on $K$ and $\mu_K$ is an average value of $\mu$ on $K$. That is,
$$
\varphi_K=\varphi(P_{1K}, \cdots,P_{dK})
$$
for some function $\varphi(P_1,\cdots,P_d)$ of the real variables $P_1,\cdots,P_d$. Thus knowing $\varphi$ we may compute the local stabilisation coefficients by
\begin{equation} \label{taufi}
\tau_K=\displaystyle \frac{h_K}{\|a\|_K}\,\varphi(P_{1K}, \cdots,P_{dK}).
\end{equation}
Of course this is just an approximation, as actually the P\'eclet numbers do vary in space, we are just taking average values.

For evolution advection-diffusion equations, the function $\varphi$ would also depend on the \lq\lq time advection" P\'eclet number, $
\disp S_K= \frac{\Delta t\,\mu_K}{h_K^2 }$. Here we just consider 1D and 2D steady problems as a first approach to the least-squares computing of the stabilised coefficients.

Our strategy is to compute the function $\varphi(P_{1}, \cdots,P_{d})$ on the nodes of a grid $\cal G$ within a parallelepiped $R= [0,{\cal P}_1]\times \cdots \times [0,{\cal P}_d] \subset \R^d$ in an off-line step. We set
$$
{\cal G}= \{ \alpha_{i_1,\cdots,i_d}= (\Delta_1 \, i_1 ,\cdots, \Delta_d \, i_d)\in \R^d, \,\,\mbox{for  }\, i_1=0,1,\cdots, M_1,\cdots, i_d=0,1,\cdots, M_d\,\},
$$
with 
$$
\Delta_i = {\cal P}_i/M_i\,\quad\mbox{for  } i=1,\cdots, d
$$
for some positive integer numbers $M_1,\cdots,M_d$. The upper extremes ${\cal P}_i$ of the intervals in $R$ are chosen in such a way that $\varphi(P_{1}, \cdots,P_{d})$ becomes nearly constant as the variable $P_i$ approaches ${\cal P}_i$, as indeed occurs in practice for any $i=1,\cdots, d$. We use a least squares technique to compute the values $\varphi(\alpha_{i_1,\cdots,i_d})$, described in the next section. 

In the online step, for a given set of values $(P_{1K}, \cdots,P_{dK})$ we determine the indices $i_1,\cdots,i_d$ such that $(P_{1K}, \cdots,P_{dK}) \in [\alpha_{i_1,\cdots,i_d}, \alpha_{i_1+1,\cdots,i_d}]\times\cdots\times [\alpha_{i_1,\cdots,i_d}, \alpha_{i_1,\cdots,i_d+1}]$ and compute $\varphi(P_{1K}, \cdots,P_{dK})$ by second order interpolation of the computed values of $\varphi$ at these nodes. Then the stabilised coefficient $\tau_K$ is computed by \eqref{taufi}.

This procedure has the advantage to apply to any kind of stabilised method, as well as to any $H^1$-conformal finite element space.

\section{Least-squares off-line computation of stabilised coefficients}\label{se:least}
Let us consider a generic consistent stabilised method, with the structure \eqref{disad}.
We determine the stabilised coefficients by comparison of the solution $u_h$ provided the method \eqref{disad} with a high-fidelity solution $u \in H_0^1(\Omega)$, as in general we do not have the actual analytic solution of problem \eqref{WEAD}. Typically this solution is obtained by solving \eqref{disad} with some reference stabilised coefficients, on a much finer grid than $\trh$. 

Note that as $\tau$ depends on the local P\'eclet numbers $P_{iK}$, if the velocity $\a$, the diffusion $\mu$ and the grid size $h_K$ are constant, all $\tau_K$ are equal to a value $\tau$.  We thus consider this situation, and address the problem of determining this value $\tau$ as a function of the element-independent local P\'eclet numbers $P_i=\disp \frac{a_i\, h}{2\mu}$. This will provide $\varphi$ as a function of $P=(P_1,\cdots,P_d)$ in the general case.

We then search for $\tilde{\tau} =\tau(P)$ solution of 
\begin{equation}\label{optau}
\tilde{\tau} =\mbox{argmin}\{J(\tau),\, \tau \in [\tau_{min},\tau_{max}]\},\quad \mbox{with  }\,\, 
\end{equation}
$$
J(\tau)= \frac{1}{2}\,\|u_h(\tau)-\Pi_h(u)\|^2_0,
$$
where $\Pi_h$ is the Lagrange interpolation operator on space $X_h$, $u_h(\tau)$ is the solution of problem \eqref{WEAD} for $\tau_K=\tau$ for all $K\in \trh$ and $\tau_{min}$, $\tau_{max}$ is the estimated minimum and maximum values that can reach the $\tau_K$, we actually set 
$$
\tau_{min}=\alpha_{min}\, h^2,\quad \tau_{max}=\alpha_{max}\, h^2
$$ 
for some $\alpha_{max}>\alpha_{min}>0$, as the standard expressions for $\tau$ imply that it is of order $h^2$ . Once  $\tilde{\tau}$ is computed, we set
$$
\varphi(P)=\frac{\|a\|}{h}\, \tilde{\tau}
$$
In this way we are minimising the error between the solution of the stabilised method and the exact solution at grid nodes. Let us recall that for 1D advection-diffusion problems with constant data, this minimum is zero for the optimal stabilisation coefficients (cf. \cite{}). Then it makes sense to target to minimise the error with respect to the Lagrange interpolate of the exact solution rather than to the exact solution itself.  

Concerning the existence of solution of problem \eqref{optau}, it holds
\begin{proposition}
Assume that the solution $u$ of problem \eqref{WEAD} belongs to $H^2(\Omega)$ and that $\Delta u \ne 0$, $\a \in L^\infty(\Omega)$, and the adjoint problem to \eqref{WEAD} is $H^2$-regularising. Assume also that the discrete problem \ref{disad} is the term-by-term stabilised method  given by \eqref{tbt}. Then problem \eqref{optau} admits a unique solution when $\alpha_{min}$ is large enough, for small enough $h$.
\end{proposition}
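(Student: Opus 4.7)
The plan is to use the classical recipe \emph{continuity plus compactness for existence, strict convexity for uniqueness}. Because $\tau_{min}=\alpha_{min}h^2$ and $\tau_{max}=\alpha_{max}h^2$, the search interval is compact, so once continuity and strict convexity of $J$ on this interval are established, the proposition follows.

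First I would show that $\tau \mapsto u_h(\tau)$ is well defined and smooth. In the term-by-term setting \eqref{tbt}, the discrete problem reads $a(u_h,v_h)+\tau\,b_h(u_h,v_h)=(f,v_h)$ for every $v_h\in X_h$, with $b_h(u,v)=\sum_{K}(\a\cdot\nabla u,\a\cdot\nabla v)_K$ symmetric positive semi-definite. The unperturbed form $a$ is already coercive on $V_{0h}^{(l)}$ (since $\nabla\cdot\a=0$ and $\mu\ge \mu_0$), and adding $\tau b_h$ with $\tau\ge 0$ preserves coercivity with a uniform constant in $\tau\in[\tau_{min},\tau_{max}]$. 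Thus $u_h(\tau)$ exists, is unique, and is a rational (hence $C^\infty$) function of $\tau$ with derivatives $u_h',u_h''$ characterised by
\begin{equation*}
a(u_h',v_h)+\tau b_h(u_h',v_h)=-b_h(u_h,v_h),\qquad
a(u_h'',v_h)+\tau b_h(u_h'',v_h)=-2b_h(u_h',v_h).
\end{equation*}
In particular $J$ is $C^\infty$ on $[\tau_{min},\tau_{max}]$, so the existence of a minimiser follows immediately from the compactness of this interval.

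For uniqueness I would prove strict convexity by showing $J''(\tau)>0$ on $[\tau_{min},\tau_{max}]$ for small $h$. Direct differentiation gives
\begin{equation*}
J''(\tau)=\|u_h'(\tau)\|_0^2+(u_h(\tau)-\Pi_h u,u_h''(\tau))_0,
\end{equation*}
so the task is to dominate the indefinite second term by the positive first one. The second term I would bound using the standard $L^2$ error estimate for the term-by-term method, which is available under the stated $H^2$-regularity of $u$ and $H^2$-regularity of the adjoint problem (Aubin--Nitsche argument): $\|u_h(\tau)-u\|_0\le C h^2\|u\|_{H^2}$ uniformly in $\tau\in[\tau_{min},\tau_{max}]$. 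Combining with the interpolation estimate $\|u-\Pi_h u\|_0 \le Ch^2\|u\|_{H^2}$ gives $\|u_h(\tau)-\Pi_h u\|_0 = O(h^2)$. Testing the equation for $u_h''$ with $u_h''$, using coercivity and the inverse inequality $\|w_h\|_0\le C h^{-1}\|w_h\|_{H^{-1}}$ (or rather $\|\a\cdot\nabla w_h\|_0 \le Ch^{-1}\|w_h\|_0$), I obtain $\|u_h''\|_0 \le C h^{-1}\|u_h'\|_0$, so the indefinite term is controlled by $C h\,\|u_h'\|_0$ in absolute value.

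It remains to bound $\|u_h'\|_0$ from below by something that overwhelms $Ch$. This is where $\Delta u\neq 0$ and the largeness of $\alpha_{min}$ enter: testing the $u_h'$ equation and using that $u_h\to u$ in $H^1$ with $\a\cdot\nabla u=f+\mu\Delta u\not\equiv 0$, one has $\|\a\cdot\nabla u_h\|_0$ bounded below by a positive constant for small $h$. Inverting $a+\tau b_h$ and exploiting that for $\tau\ge \alpha_{min}h^2$ large enough $\alpha_{min}$ the stabilised term controls the advective residual in a weighted $L^2$ norm, I expect to get $\|u_h'\|_0 \ge c_0>0$ independent of $\tau$ and $h$. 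Plugging these estimates in, $J''(\tau)\ge c_0^2 - Ch\,c_0 >0$ for sufficiently small $h$, which yields the claimed strict convexity and hence uniqueness. The main obstacle I anticipate is precisely this last lower bound on $\|u_h'\|_0$: it requires a careful non-degeneracy argument leveraging both $\Delta u\neq 0$ and the lower bound $\tau\ge \alpha_{min} h^2$, and is where the hypothesis on $\alpha_{min}$ being large plays its genuine role.
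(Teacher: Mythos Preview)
Your skeleton matches the paper's: continuity of $\tau\mapsto u_h(\tau)$ plus compactness of $[\tau_{min},\tau_{max}]$ for existence, then strict convexity via $J''(\tau)=\|z_h\|_0^2+(u_h(\tau)-\Pi_h u,\,w_h)$ with $z_h=u_h'(\tau)$, $w_h=u_h''(\tau)$, an $O(h^{l+1})$ bound on $\|u_h(\tau)-\Pi_h u\|_0$ by Aubin--Nitsche, an estimate on $w_h$ in terms of $z_h$, and a lower bound on $\|z_h\|_0$. Two steps, however, diverge from the paper and leave a genuine gap.

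\emph{Where $\alpha_{min}$ enters.} You estimate $w_h$ using only the $\mu$-coercivity of $a$, obtaining $\|u_h''\|_0\le C_\mu h^{-1}\|u_h'\|_0$ and a cross term of size $C_\mu h^{l}\|u_h'\|_0$ in which $\alpha_{min}$ never appears. The paper proceeds differently: testing the $w_h$ equation with $w_h$ and applying Young's inequality with weight $\tau$ to absorb $\a\cdot\nabla w_h$ into the stabilisation term yields $\|\nabla w_h\|_0\le C\mu^{-1/2}\tau^{-1/2}h^{-1}\|z_h\|_0$, hence a cross term bounded by $C'\tau^{-1/2}h^{l}\|z_h\|_0\le C'\alpha_{min}^{-1/2}\|z_h\|_0$ (since $\tau\ge\alpha_{min}h^2$). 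So in the paper ``$\alpha_{min}$ large'' is what makes the \emph{upper} bound on the indefinite term small; it is not used in the lower bound on $\|z_h\|_0$ as you propose.

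\emph{The lower bound on $\|z_h\|_0$ and the role of $\Delta u\neq 0$.} Your sketch relies on $\a\cdot\nabla u\not\equiv 0$, which does \emph{not} follow from the hypothesis $\Delta u\neq 0$ (take $\a$ tangent to the level sets of $u$). The paper's mechanism is a compactness-and-limit argument: it writes the right-hand side of the $z_h$ equation as $(f-\a\cdot\nabla u_h,\a\cdot\nabla v_h)$ (note this differs from your $-b_h(u_h,v_h)$), shows $\{z_h\}_{h>0}$ is bounded in $H^1_0(\Omega)$, extracts a weak limit $z$, and passes to the limit (using $\tau\le\alpha_{max}h^2\to 0$ to kill the stabilisation term) to identify $z$ as the solution of the \emph{continuous} problem $a(z,v)=(f-\a\cdot\nabla u,\a\cdot\nabla v)$ for all $v\in H^1_0(\Omega)$. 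Since $f-\a\cdot\nabla u=\nabla\cdot(\mu\nabla u)\neq 0$ by hypothesis, $z\neq 0$, and weak lower semicontinuity of the norm gives $\|z_h\|_0\ge\tfrac12\|z\|_0$ for small $h$. This limit argument is the missing idea in your proposal; without it the lower bound you ``expect'' remains unjustified and the link to $\Delta u\neq 0$ is lost.
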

\begin{proof} The functional $J$ is continuous as $u_h(\tau)$ is a continuous function of $\tau$. Indeed, $u_h(\tau)$ is obtained by means of the solution of a non-singular linear systems whose coefficients are continuous functions of $\tau$. Then $J$ admits at least a minimum in $[0,\tau_{max}]$. We next prove that $J$ is strictly convex to ensure the uniqueness of the minimum. It holds
$$
J'(\tau)= (u_h(\tau)-\Pi_h(u), z_h),\quad J''(\tau)= \|z_h\|_0^2 - (u_h(\tau)-\Pi_h(u), w_h)
$$
where $z_h=\disp \frac{du_h}{d\tau}(\tau),\,\, w_h=\frac{d^2u_h}{d\tau^2}(\tau) \in X_h$ are the solutions of
\beq \label{pbzh}
a(z_h,v_h) + \tau \, (\a\cdot \nabla z_h, \a\cdot \nabla v_h) = (f- \a\cdot \nabla u_h, \a\cdot \nabla v_h),\quad \forall v_h \in X_h,
\eeq
\beq \label{pbwh}
a(w_h,v_h) + \tau \, (\a\cdot \nabla w_h, \a\cdot \nabla v_h) = -2\,( \a\cdot \nabla z_h, \a\cdot \nabla v_h),\quad \forall v_h \in X_h,
\eeq
As the adjoint problem to \eqref{WEAD} is $H^2$-regularising, then (cf. \cite{})
\beq \label{errpih}
\|u_h(\tau)-\Pi_h(u)\|_0 \le C\, h^{l+1} \,\|u\|_{2,2,\Omega}.
\eeq
Also, setting $v_h=w_h$ in \eqref{pbwh}, 
$$
\mu \, \|\nabla w_h\|_0^2 + \tau\,\|\a\cdot \nabla w_h\|_0^2 = -2\,( \a\cdot \nabla z_h, \a\cdot \nabla w_h)
$$
Applying Young's inequality, it follows
\begin{eqnarray} \label{est1wh}
\mu \, \|\nabla w_h\|_0^2 + \frac{\tau}{2}\,\|\a\cdot \nabla w_h\|_0^2 &\le&  2\,\tau^{-1}\, \|\a\cdot \nabla z_h\|_0^2  \le 2\,\tau^{-1}\, \|\a\|_{\infty,\Omega}^2\, \sum_{K\in \trh} \|\nabla z_h\|_{0,K}^2 \nonumber\\
&\le& C\,\tau^{-1}\, \|\a\|_{\infty,\Omega}^2\, h^{-2} \|z_h\|_0^2,
\end{eqnarray}
where in the last inequality we have applied the inverse finite element estimate 
$$
\|\nabla z_h\|_{0,K} \le C\, h^{-1} \, \|z_h\|_{0,K},
$$
for some constant $C>0$ depending only on $\Omega$. Then,
\begin{eqnarray}\label{estwh}
 \|\nabla w_h\|_0 \le C\, \|\a\|_{\infty,\Omega}\,\mu^{-1/2}\, \tau^{-1/2}\, h^{-1}\, \|z_h\|_0.
\end{eqnarray}
Then, from \eqref{errpih} and \eqref{estwh}, it follows 
\beq\label{estprod1}
(u_h(\tau)-\Pi_h(u), w_h)\le C\, \|\a\|_{\infty,\Omega}\,\|u\|_{2,2,\Omega}\,\mu^{-1/2}\, \tau^{-1/2} \, h^l\, \|z_h\|_0.
\eeq
Then,
$$
J''(\tau) \ge \|z_h\|_0\, (\|z_h\|_0 - C'\,  \tau^{-1/2} \, h^l) \ge \|z_h\|_0\, (\|z_h\|_0 - C'\, \alpha_{min}^{-1/2}).
$$
for some constant $C'>0$.
Further, from \eqref{pbzh} similarly to \eqref{est1wh} it follows
\begin{eqnarray*}
\mu \, \|\nabla z_h\|_0^2 + \frac{\tau}{2}\,\|\a\cdot \nabla z_h\|_0^2 &\le&  2\,\tau^{-1}\, \|f-\a\cdot \nabla u_h\|_0^2 .
\end{eqnarray*}
Consequently, the sequence $\{z_h\}_{h>0}$ is bounded in $H^1_0(\Omega)$ and then it contains a sub-sequence weakly convergent in $H^1_0(\Omega)$ to some $z$, that we denote in the same way. Given $v \in H^1_0(\Omega)$, let us consider a sequence $\{v_h\}_{h>0}$ with $v_h \in X_h$ strongly convergent to $v$ in $H^1_0(\Omega)$. As $\tau \le C\, h^2$, then
$$
\lim_{h \to 0} \tau \, (\a\cdot \nabla z_h, \a\cdot \nabla v_h) =0,
$$
and then passing to the limit in problem \eqref{pbzh}, it follows that $z$ satisfies
$$
a(z,v)= (f- \a\cdot \nabla u, \a\cdot \nabla v)\quad \forall v \in H^1_0(\Omega).
$$
As this problems admits a unique solution, then the full sequence $\{z_h\}_{h>0}$ weakly converges to $z$. As $\Delta u \ne 0$, then $f- \a\cdot \nabla u \ne 0$ and $z $ does not vanish. Due to the weak lower continuity of the norm in Hilbert spaces, it follows that there exists $h_0>0$ such that 
$$
\|z_h\|_0 \ge \frac{1}{2}\, \|z\|_0\quad \mbox{for  } 0<h<h_0.
$$
Consequently,  if $\alpha_{min} > \disp \left ( \frac{2C'}{\|z\|_0}\right )^2$ and $0<h<h_0$,
$$
J''(\tau) \ge \frac{1}{2}\, \|z\|_0\, (\frac{1}{2}\, \|z\|_0 - C'\, \alpha_{min}^{-1/2}),
$$
and then $J''(\tau)>0 $.
\end{proof}
\begin{remark} Observe that $z$ vanishes for consistent discretisations. We may use formal arguments to have an overall understanding of why also in this case $J''(\tau) >0$ for small enough $h$. Indeed, let $Q=P$ for simplicity, the argument for general $Q$ is similar but more invoved. The derivatives $\displaystyle z_h=\frac{d u_h}{d\tau} (\tau)$, $\displaystyle w_h=\frac{d^2 u_h}{d\tau^2} (\tau)$ satisfy
\beq \label{pbzhg1}
a(z_h,v_h) + \tau \, (P (z_h),P( v_h)) = (R(u_h), P( v_h)),\quad \forall v_h \in X_h,
\eeq
\beq \label{pbzhg2}
a(w_h,v_h) + \tau \, (P (w_h),P( v_h)) = -2(P(u_h), P( v_h)),\quad \forall v_h \in X_h,
\eeq
where $R(u_h)=f-P(u_h)=P(u-u_h)$ is the residual at $u_h$. From \eqref{pbzhg1}, by standard arguments,
$$
\mu \, \|\nabla z_h\|_0^2 +\frac{ \tau }{2}\,\|P(z_h)\|_0^2 \le \frac{ \tau^{-1} }{2}\,\|R(u_h)\|_0^2,
$$
and we may conjecture that as $h \searrow 0$, $\|\nabla z_h\|_0^2$ will asymptotically behave as $ \tau^{-1}\, \|R(u_h)\|_0^2$. Consequently, $\|z_h\|_0^2$ will at least scale (in terms of $h$) with this rate as $h \searrow 0$. Also, from \eqref{pbzhg2},
$$
\mu \, \|\nabla w_h\|_0^2 +\frac{ \tau }{2}\,\|P(w_h)\|_0^2 \le 2 \, \tau^{-1} \,\|P(z_h)\|_0^2 \le 2 \, \tau^{-3} \,\|R(u_h)\|_0^2.
$$
Then,
\begin{eqnarray*}J''(\tau) &\ge& C_1\, \tau^{-1}\, \|R(u_h)\|_0^2- C_2\,\|u_h-\Pi_h u\|_0 \,\|\nabla w_h\|_0 \ge C_1\, \tau^{-1}\,  \|R(u_h)\|_0 - C_3\,\tau^{-3/2}\,\|u_h-\Pi_h u\|_0) \\
&\ge& \tau^{-1}\,h^{k-1}\, ( C_4 - C_5\, \tau^{-1/2}\,h^2),
\end{eqnarray*}
where we have used the finite element estimates
$$
\|R(u_h)\|_0= \|P(u-u_h)\|_0 \le C\, h^{k-1},\quad \|u_h-\Pi_h u\|_0\le C h^{k+1},
$$
that hold for $u \in H^{k+1}(\Omega)$. Then if $h \le \alpha_{min}^{1/2}\, C_4/C_5$, $J''(\tau) >0$ on all the interval $[\tau_{\min} ,\tau_{\max} ]$.
\end{remark}
\subsection{Numerical testing}
Figure \ref{fig1}a displays the functional J for the stabilised solution of 1D and 2D advection-diffusion equations with $\P_1$ finite elements. We may observe the convexity of $J$. In the 1D case, corresponding to $Pe=1.6667$, the functional vanishes in a unique value of $\tau$ ($\tau \simeq 0.001185$). This means that for this value the stabilised solution coincides with the exact one at the grid nodes, as we expected, so we recover the optimal stabilised coefficients. However for 2D advection-diffusion equations the minimum of the functional $J$ is not zero (see Figure \ref{fig1}b), $J$ achives its minimum at  $\tau \simeq 0.00317$ with value  $1.5733e-05$, in this case the (vector) P\'eclet number is $(-2.67878,\,6.46716)$.

Figure \ref{fig2} shows the values of the functional $J$ for 2D advection-diffusion equation, with varying velocities $\a=(\cos\alpha, \sin\alpha)$. We may observe that for fixed velocity $J$ is convex
\begin{figure}[h!]
\begin{center}
\begin{tabular}{ll}(a)  Functional $J$ for 1D problem. &(b)  Functional $J$ for 2D problem. \\
\includegraphics[width=0.5\linewidth]{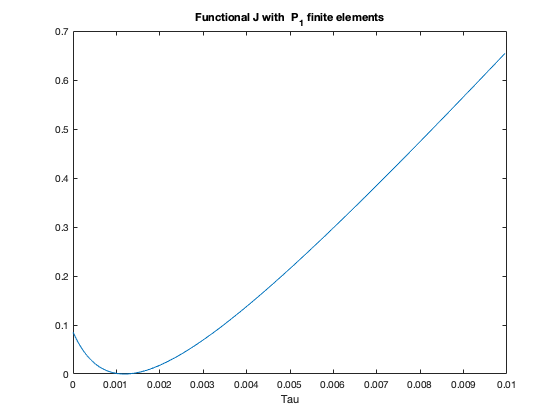}&\includegraphics[width=0.5\linewidth]{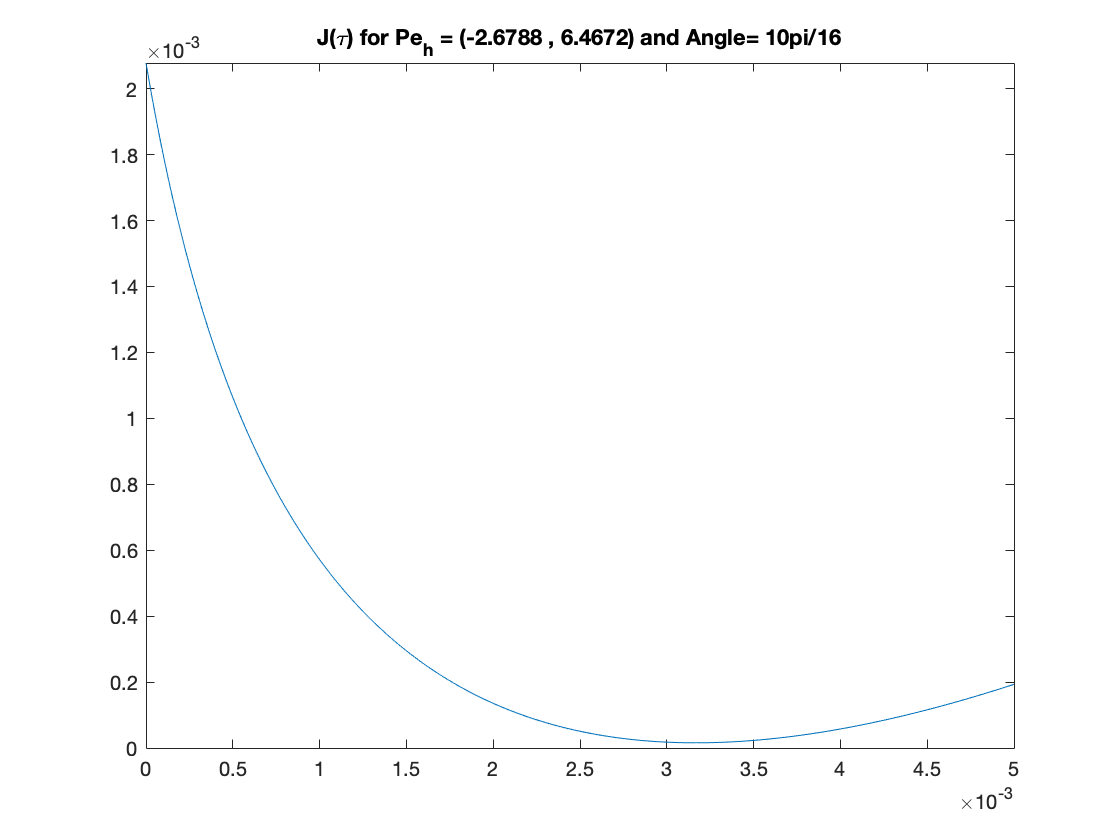} \\
\end{tabular}
\end{center}
\caption{\label{fig1} {Representation of the functional J for the stabilised solution of the advection-diffusion equations in 1D (panel (a)), corresponding to $Pe=1.6667$ and 2D (panel (b)) with $\P_1$ finite elements, corresponding to $Pe=(-2.67878,\,6.46716)$. }}
\end{figure}

\begin{figure}[h!]
\begin{center}
\includegraphics[width=0.7\linewidth]{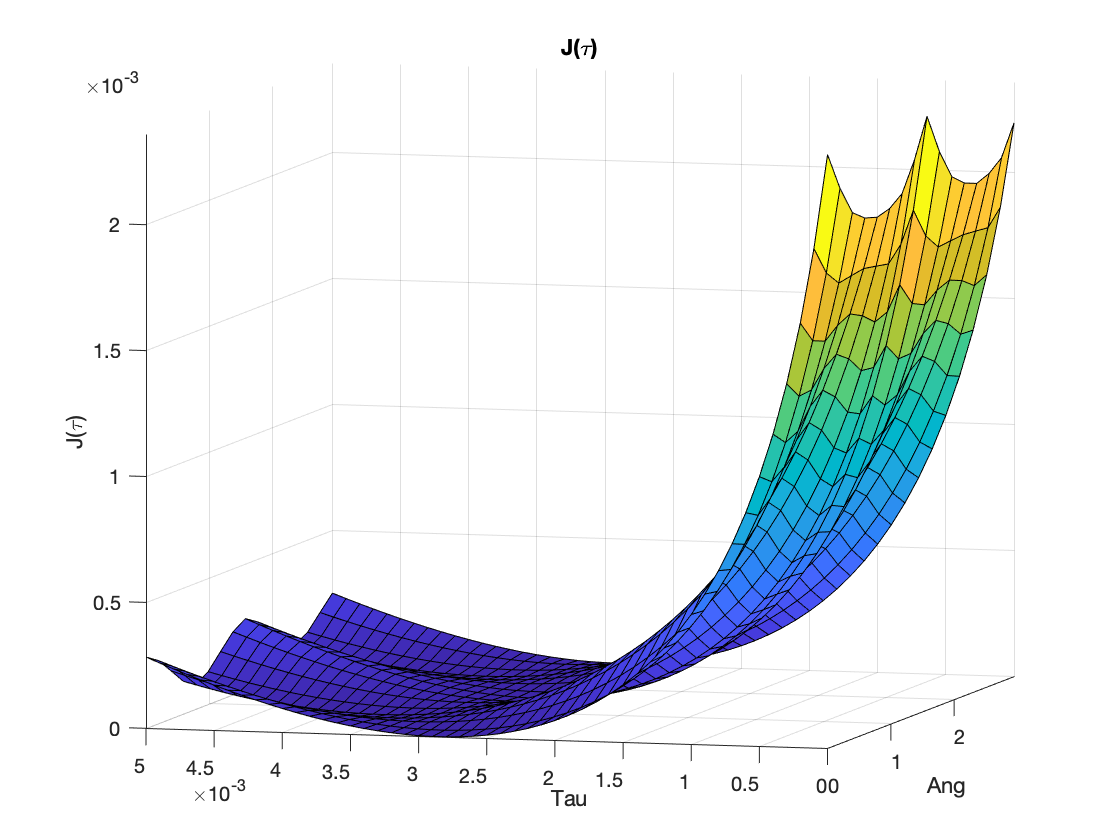} 
\end{center}
\caption{\label{fig2} {Representation of the functional J for the stabilised solution of the advection-diffusion equations in 2D with velocities $\a=(\cos\alpha, \sin\alpha)$. }}
\end{figure}

\section{Numerical tests}\label{se:numtest}
We afford in this section the numerical testing of the optimal stabilised coefficients computed as stated in Section \ref{se:least} for $\P_1$, $\P_2$ and $\P_3$ finite elements. We shall consider two different test problems: 2D linear advection-diffusion with constant and variable velocity, and 2D Navier-Stokes flow, in the latter we shall also use the $\P_1$+Bubble finite element space. 

For each test, we shall compare the results provided by using the least-squares stabilised coefficients, to those obtained through several formulae for these coefficients. In particular, {we will consider two classical isotropic and four anisotropic coefficients}. 

As classical coefficients, at first, we consider the generalization to 2D of the optimal stabilization coefficient in 1D \cite{christie,johnkno}, that is,
\begin{equation}\label{tau1D}
 \tau_{1D}^{(K)} =\displaystyle \frac{  \mu}{\|\mathbf{a}_K\|^2}\,\varphi(Pe_h),\,\mbox{ with  } \varphi(P)=(P \coth(P)-1),
\end{equation}
where $Pe_h=(h_K\|\mathbf{a}\|)/(2\mu)$ is the element P\'eclet number.

Second, the stabilisation coefficient through orthogonal sub-scales in finite element methods proposed by Codina in \cite{codina12} are, 
\begin{equation}\label{taucodina}
\tau^{(K)}_C=\left(\left(4\frac{\mu}{h_K^2}\right)^2+\left(2\frac{\|\mathbf{a}_K\|}{h_K}\right)^2\right)^{-1/2}.
\end{equation}

{As anisotropic coefficients, first, we consider the stabilization coefficient of the VMS-spectral method introduced by Chac\'on et al. in \cite{chafergom}. These coefficients are computed by spectral solution of the subgrid scales, and are not expressed through analytic formulas. }

{Next, we consider the anisotropic version of Codina coefficient proposed by Colom\'es et. al. in \cite{Colomes2018}, which consists in considering $h_{flow,K}$ instead of $h_K$ in the convection-dominated regime, that is, 
\begin{equation}\label{taucodinacolomes}
\tau^{(K)}_{CC}=\left(\left(4\frac{\mu}{h_K^2}\right)^2+\left(2\frac{\|\mathbf{a}_K\|}{h_{flow,K}}\right)^2\right)^{-1/2}.
\end{equation}}

{Moreover, we consider the stabilization parameter based on $L_2$ norm which was proposed by Hauke et. al. in \cite{Hauke2008},
\begin{equation}\label{tauhauke}
\tau^{(K)}_{H}=\min\left(\frac{h_{flow,K}}{\sqrt{3}\|\mathbf{a}_K\|},\frac{h_{K}^2}{24.24 \mu}\right).
\end{equation}}

{We  also consider the stabilization coefficient that was introduced by Franca and Valentin in \cite{Franca2000} and extended by Hauke et. al. in \cite{Hauke2002}, which in our case is given by,
\begin{equation}\label{tauflow}
\tau^{(K)}_{flow}=\left(\frac{2\mu}{m_K h_{K}^2}\xi(Pe)\right)^{-1},
\end{equation}
where $Pe=(m_k \|\mathbf{a}_K\|h_{K})/\mu$, $m_k=1/3$ and
\begin{equation}\label{defxi}
\xi(x)=\left\{\begin{array}{ll}
1& \mbox{if }0\leq x \leq 1, \\
x&\mbox{if }x>1.
\end{array}\right.
\end{equation}}

We will finally consider an anisotropic version of our least-squares stabilised coefficient, specifically
\begin{equation} \label{taufi}
\tau_{LSflow}^{(K)}=\displaystyle \frac{h_{flow,K}}{\|a\|_K}\,\varphi(P).
\end{equation}
where $h_{flow,K}$ is the elemental length along the flow direction for the advective terms.  

These stabilised coefficients are initially designed for $\P_1$ finite elements, to adapt them to $\P_k$ finite elements for integer $k \ge 2$, we replace $h_K$ by $h_K/k$. We do so because the distance between consecutive Lagrange interpolation points for $P_k$ on the sides or edges of element $K$ is $h_K/k$. Anyhow we only consider the coefficients obtained by spectral VMS technique for $\P_1$ finite elements, as these are not known for higher interpolation order. 

We divide the results into four subsections. In the first one, we focus on the advection-diffusion equation with constant velocity while in the second one, we consider the same problem with anisotropic velocities in a structured mesh. The third one is devoted to solve the advection-diffusion equation on unstructured meshes, in a flow around a cylinder. {We finally consider a tests to solve Navier-Stokes equations, actually the lid-driven cavity flow. }


\subsection{Advection-diffusion problems}
\subsubsection{Test 1: Advection-diffusion problem, constant advection velocity with varying orientation}
In this section, we compare the performances of all considered stabilised coefficients for the advection-diffusion problem (\ref{WEAD}) with constant velocity, but for several orientations and P\'eclet numbers.



We actually consider the unit square with homogeneous Dirichlet boundary conditions, 
with diffusion coefficient $\mu=1$ and source term $f(x,y)=\sin(\pi x)\cos(\pi y)$. We take triangular meshes of isosceles right triangles with sides of length $h=1/120,$ for $\P_1$, $h=1/60,$ for $\P_2$ and $h=1/40,$ for $\P_3$ finite elements, respectively. We consider constant velocities of the form  $\mathbf{a}= (k\sqrt{2}\cos{\alpha},k\sqrt{2}\sin{\alpha}),$ for $k=400,\,800,\,1600,\, \cdots,\,102400$, where $\alpha=n\pi/10,$ for $n=0,2,\ldots,18.$ In these cases, the global P\'eclet number ${Pe_h}$  varies between 9.4281 and 603.398 for $\P_1$ and $\P_2$ finite elements, respectively, and between 7.0711 and 905.097 for $\P_3$\coln.

{\coln In Figures \ref{errores1} and \ref{errores3}, we represent for each $n=0,2,\ldots,18,$ the mean errors in $L^2$ and $L^\infty$ norms, of the stabilised solution obtained through the least-squares and  stabilised coefficients and those stated in \eqref{tau1D}-\eqref{taufi},  for $\P_1$ and $\P_3$ f. e., respectively. The VMS-spectral stabilised solution are computed only $\P_1$ f. e.. The mean errors are averaged with respect to the P\'eclet numbers for the ranges of values of ${Pe_h}$ specified before and we compare the stabilised solution with a reference solution of the problem computed in a grid with grid size $h=1/1200$ for $\P_1$ f. e. and $h=1/400$ for $\P_3$ f. e.. \coln These errors roughly behave as periodic functions with period $\pi$ in all the cases. }
\begin{figure}[h!]
\begin{center}
\begin{tabular}{ll}(a)  Errors in norm $L^2$ classical coeffs. &(b)  Errors in norm $L^\infty$ classical coeffs. \\
\includegraphics[width=0.5\linewidth]{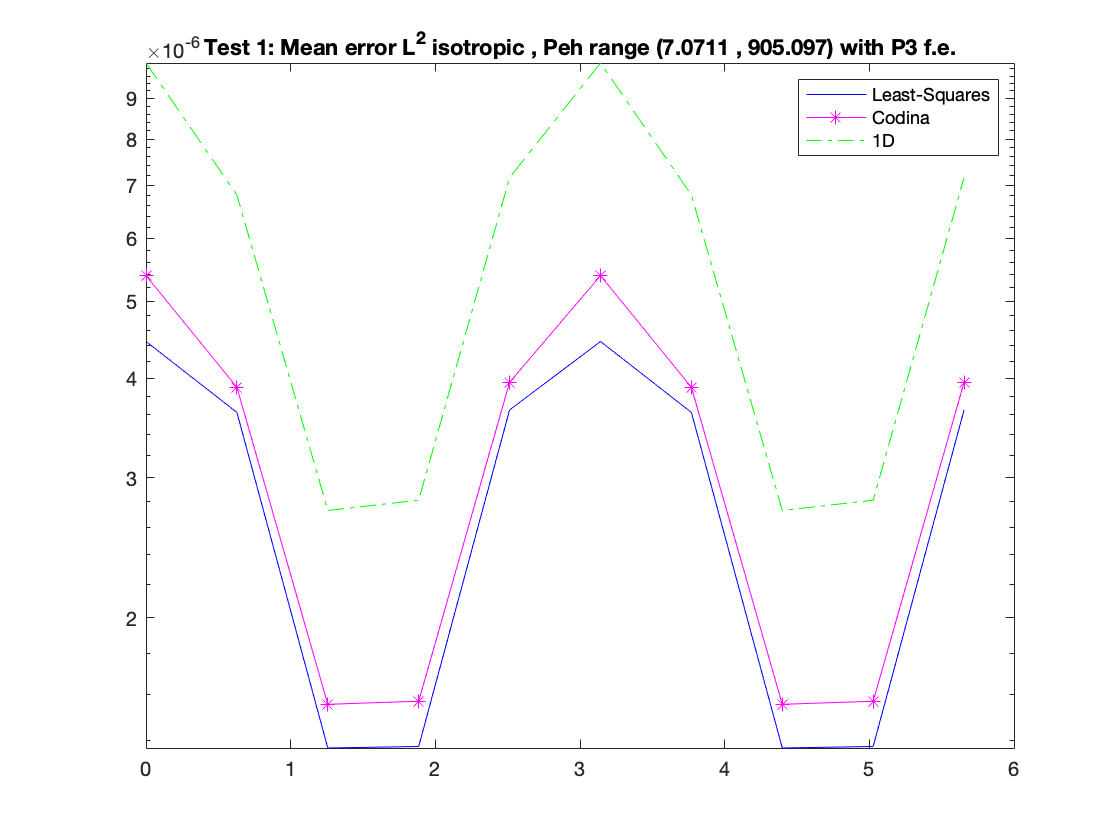}&\includegraphics[width=0.5\linewidth]{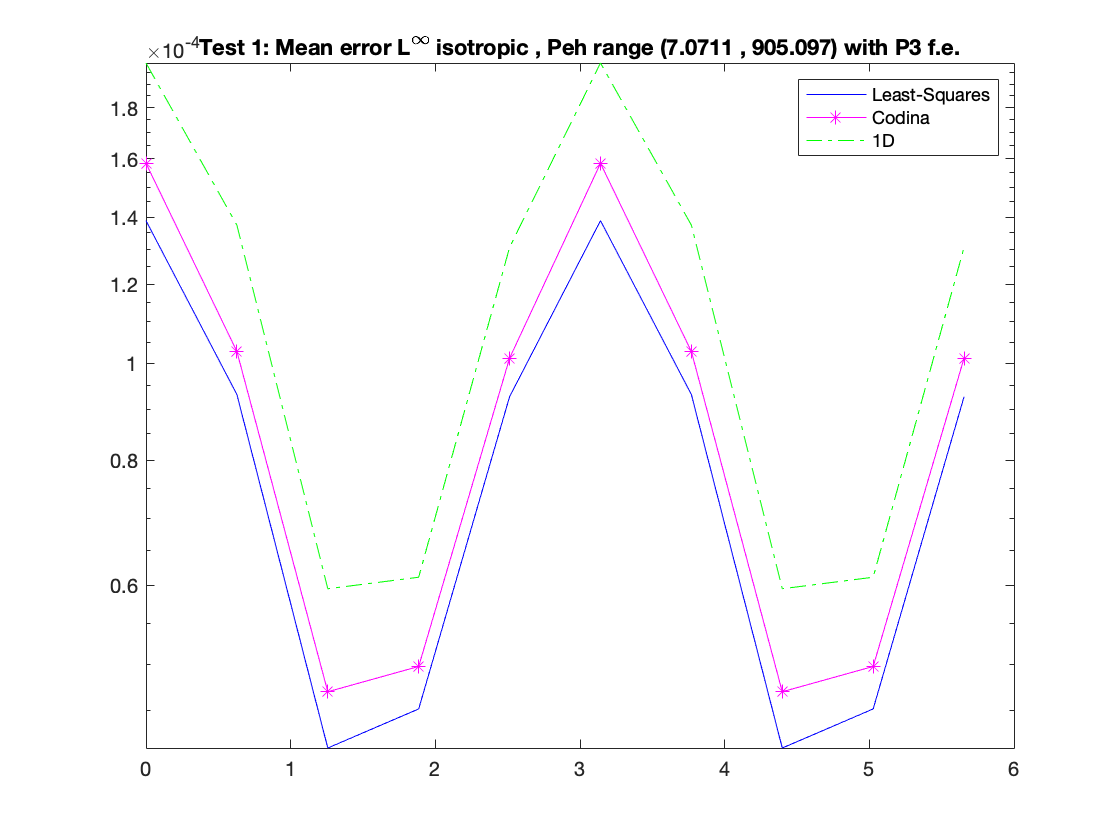} \\
{(c)  Errors in norm $L^2$ anisotropic coeffs.} &{(d)  Errors in norm $L^\infty$ anisotropic coeffs.}\\
\includegraphics[width=0.5\linewidth]{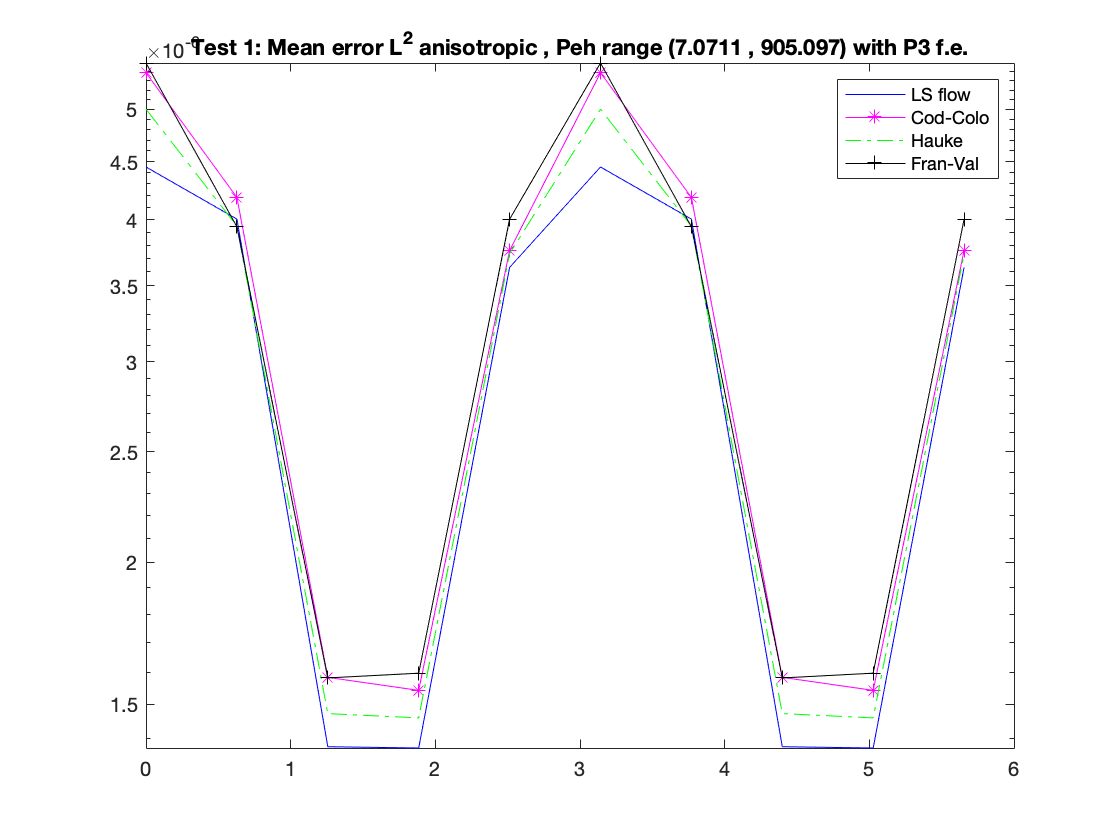}&\includegraphics[width=0.5\linewidth]{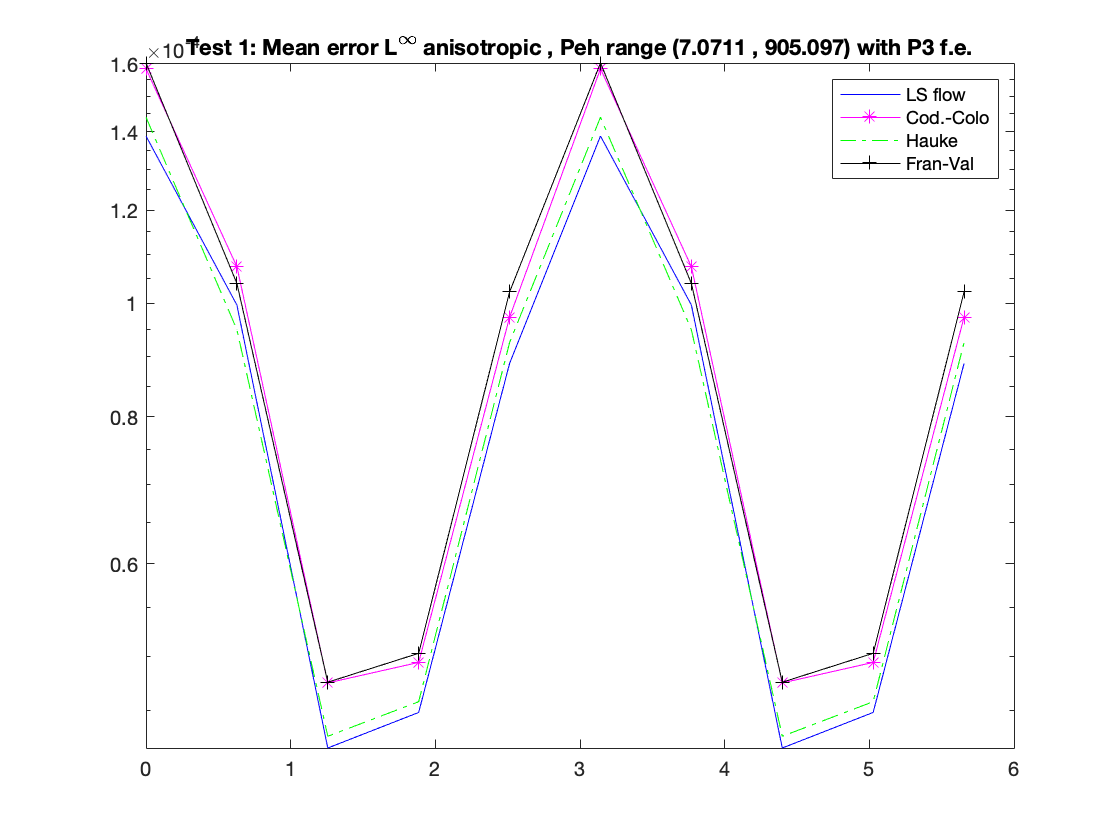}\\
\end{tabular}
\end{center}
\caption{\label{errores1} {Test 1: Representation of the errors obtained with $\mathbb{P}_1$ finite elements and different schemes in norms $L^2$ (panels (a) and (c)) and $L^\infty$, (panels (b) and (d)) for $\alpha=n\pi/10,$ with $n=0,2,\ldots,18.$ In panels (a) and (b), we represent the errors obtained with the least-squares stabilised coefficients, the errors obtained with the VMS-spectral stabilised coefficients and the errors obtained with the classical coefficients. In panels (c) and (d), we represent the errors obtained with all the anisotropic coefficients, in particular the least-squares ones.}}
\end{figure}
\begin{figure}[h!]
\begin{center}
\begin{tabular}{ll}(a)  Errors in norm $L^2$ classical coeffs. &(b)  Errors in norm $L^\infty$ classical coeffs. \\
\includegraphics[width=0.5\linewidth]{fig_errorP3l2_iso.png}&\includegraphics[width=0.5\linewidth]{fig_errorP3linf_iso.png} \\
{(c)  Errors in norm $L^2$ anisotropic coeffs.} &{(d)  Errors in norm $L^\infty$ anisotropic coeffs.}\\
\includegraphics[width=0.5\linewidth]{fig_errorP3l2_aniso.png}&\includegraphics[width=0.5\linewidth]{fig_errorP3linf_aniso.png}\\
\end{tabular}
\end{center}
\caption{\label{errores3} {Test 1: Representation of the errors obtained with $\mathbb{P}_3$ finite elements and different schemes in norms $L^2$ (panels (a) and (c)) and $L^\infty$, (panels (b) and (d)) for $\alpha=n\pi/10,$ with $n=0,2,\ldots,18.$ In panels (a) and (b), we represent the errors obtained with the least-squares stabilised coefficients, the errors obtained with the VMS-spectral stabilised coefficients and the errors obtained with the classical coefficients. In panels (c) and (d), we represent the errors obtained with all the anisotropic coefficients, in particular the least-squares ones.}}
\end{figure}

%
{\coln In Table \ref{tab:table1}, we provide, for $\P_1$, $\P_2$ and $\P_3$ finite elements, the mean errors in $L^2$ and $L^\infty$ norms, averaged with respect to both the angles $\alpha=n\pi/10,$ for $n=0,2,\ldots,18$ and the P\'eclet numbers ranging between \coln 9.4281 and 603.398 for $\P_1$ and $\P_2$ f. e., and between 7.0711 and 905.097 for $\P_3$, respectively. We observe that the least squares stabilised coefficients provide the smallest errors for three types of f. e. in both $L^2$ and $L^\infty$ norms. We also observe that using the anisotropic version of the least-squares stabilised coefficients does not improve the error in any case, although these coefficients provide the second best errors. This is likely due to the fitting of the least-squares procedure to compute the stabilised coefficients to the actual geometrical element size. 
}
 \begin{table}[h!]
  \begin{center}
    { \begin{tabular}{l c l c }  
   \hline\hline                        
   $\P_1$, &${Pe_h} $-range& (9.4281, &603.398),   \\   \hline                   
   $L^2_{LS}$ &\textbf{1.8547e-06}&$L^\infty_{LS}$ & \textbf{5.3824e-05}\\
   $L^2_{VMS}$ &1.9564e-06&$L^\infty_{VMS}$&5.4445e-05  \\  
   $L^2_{1D}$ &1.8965e-06&$L^\infty_{1D}$ & 5.4329e-05\\ 
   $L^2_C$ &1.9223e-06&$L^\infty_C$ & 5.4521e-05\\ 
   $L^2_{LSflow}$ &1.8547e-06&$L^\infty_{LSflow}$ & 5.3908e-05\\
   $L^2_{CC}$ &1.9322e-06&$L^\infty_{CC}$ & 5.4551e-05\\ 
   $L^2_H$ &1.9948e-06&$L^\infty_H$ & 5.4912e-05\\ 
   $L^2_{flow}$ &1.9237e-06&$L^\infty_{flow}$ & 5.4530e-05\\
   [0.5ex]        
   \hline\hline                        
 $\P_2$, &${Pe_h} $-range& (9.4281, &603.398),    \\   \hline                   
   $L^2_{LS}$ &\textbf{2.6855e-06}&$L^\infty_{LS}$ &  \textbf{6.6932e-05}\\
   $L^2_{1D}$ &3.6969e-06&$L^\infty_{1D}$ & 7.7575e-05\\ 
   $L^2_C$ &2.8943e-06&$L^\infty_C$ & 7.1192e-05\\ 
   $L^2_{LSflow}$ &2.7045e-06&$L^\infty_{LSflow}$ & 6.7044e-05\\
   $L^2_{CC}$ &2.9102e-06&$L^\infty_{CC}$ & 7.1380e-05\\ 
   $L^2_H$ &2.8930e-06&$L^\infty_H$ & 7.0498e-05\\ 
   $L^2_{flow}$ &2.9045e-06&$L^\infty_{flow}$ & 7.1329e-05\\
  [0.5ex]       
   \hline\hline                        
  $\P_3$, &${Pe_h} $-range& (7.0711, &905.097),    \\   \hline                   
   $L^2_{LS}$ &\textbf{2.8944e-06}&$L^\infty_{LS}$ &  \textbf{8.2187e-05}\\
   $L^2_{1D}$ &  5.8929e-06&$L^\infty_{1D}$ & 1.1763e-04\\ 
   $L^2_C$ &3.2725e-06&$L^\infty_C$ & 9.1822e-05\\ 
   $L^2_{LSflow}$ &2.9673e-06&$L^\infty_{LSflow}$ & 8.2774e-05\\
   $L^2_{CC}$ &3.2908e-06&$L^\infty_{CC}$ & 9.1999e-05\\ 
   $L^2_H$ &3.1234e-06&$L^\infty_H$ & 8.4009e-05\\ 
   $L^2_{flow}$ &3.3238e-06&$L^\infty_{flow}$ & 9.2847e-05\\
  [0.5ex]        
   \hline     
   \end{tabular}}
    \end{center}
 \caption{  \label{tab:table1} {Test 1. Averaged errors w. r. t. the angles $\alpha=n\pi/10,$ for $n=0,2,\ldots,18$ and the P\'eclet numbers in $L^2$ and $L^\infty$ norms for $\P_1$, $\P_2$ and $\P_3$ finite elements. The P\'eclet numbers range between 9.4281 and 603.398 for $\P_1$ and $\P_2$ f.e. and between 7.0711 and 905.097 for $\P_3$ f.e., respectively.
}}
    \end{table}   

\subsubsection{Test 2: Advection-diffusion equations with anisotropic velocities}
In this section we afford the testing of the least-squares coefficients for the advection-diffusion equations (\ref{WEAD}) with anisotropic velocities. We consider as domain the rectangle $\Omega=(0,1)\times(0,1/2)$ with homogeneous Dirichlet boundary conditions and source term $f(x,y)=1$. {\coln We consider diffusion coefficients $\mu$ that vary with values $\mu=1.25e-05, \,2.5e-05,$ $\,5.0e-05,\cdots,0.0016$ and advection velocity}

\begin{equation}\label{vel}
\mathbf{a}(x,y)=(a_1(x,y),a_2(x,y)),
\end{equation}
where
\begin{equation}\label{a1}
a_1(x,y)=\left\{\begin{array}{ll}
-0.1(y-0.5)&\mbox{if }\sqrt{(x-0.5)^2+(y-0.5)^2}<0.01, \\ \noalign{\smallskip}
-2(y-0.5)&\mbox{if }\sqrt{(x-0.5)^2+(y-0.5)^2}\geq 0.01,
\end{array}\right.
\end{equation}
and
\begin{equation}\label{a2}
a_2(x,y)=\left\{\begin{array}{ll}
0.1(x-0.5)&\mbox{if }\sqrt{(x-0.5)^2+(y-0.5)^2}<0.01, \\ \noalign{\smallskip}
2(x-0.5)&\mbox{if }\sqrt{(x-0.5)^2+(y-0.5)^2}\geq 0.01.
\end{array}\right.
\end{equation}
In Figure \ref{fig_vel} we represent the velocity vector field $\mathbf{a}$ given in (\ref{vel})-(\ref{a2}). 
\begin{figure}[h!]
\begin{center}
\includegraphics[width=0.6\linewidth]{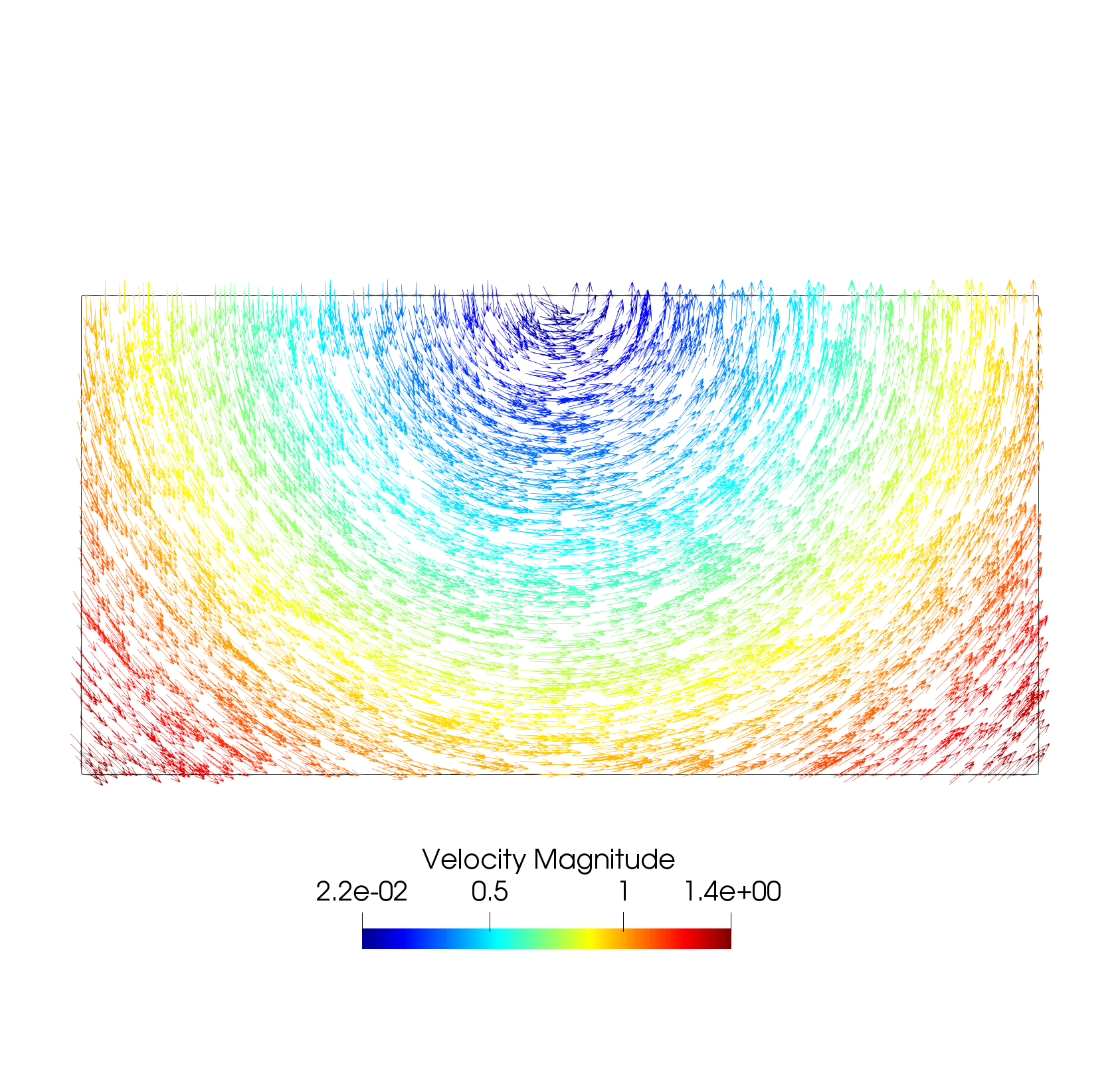}
\end{center}
\caption{\label{fig_vel} Representation of the velocity $\mathbf{a}(x,y)$.}
\end{figure}

 {\coln In Table \ref{tab:table2} we provide, for $\P_1$, $\P_2$ and $\P_3$ finite elements on a grid of size $h=1/96$, the mean errors in $L^2$ and $L^\infty$ norms, averaged with respect to  the P\'eclet numbers. These vary within the range between 5.1201 and 655.378. The errors have been computed by comparing the stabilised solution in a mesh of $N=96$ with  a  reference solution of (\ref{WEAD}) obtained with a refined mesh of $N^*=12N$ for $\P_1$ f. e., with a refined mesh of $N^*=6N$ for $\P_2$ f. e. and with a refined mesh of $N^*=4N$ for $\P_3$ f. e.. 
In this test we compute the stabilised solution through the least-squares stabilised coefficients, through the anisotropic least-squares stabilised coefficients \eqref{taufi},  through the VMS-spectral stabilised coefficients (only $\P_1$ f. e.) and through  the stabilised coefficients given in \eqref{taucodina}, \eqref{tauhauke} and \eqref{tauflow}.
 \begin{table}[h!]
 \begin{center}
     \begin{tabular}{l c l c }  
   \hline\hline                        
   $\P_1$, &${Pe_h} $-range &(5.1201, &655.378)  \\   \hline                   
   $L^2_{LS}$ &\textbf{0.044698}&$L^\infty_{LS}$ & \textbf{1.758}\\
   $L^2_{VMS}$ & 0.049077&$L^\infty_{VMS}$&1.7723  \\  
   $L^2_C$ &0.055891&$L^\infty_C$ & 1.8211\\ 
   $L^2_{LSflow}$ & 0.044724&$L^\infty_{LSflow}$ & 1.758\\
   $L^2_H$ &0.055089&$L^\infty_H$ & 1.8101\\ 
   $L^2_{flow}$ &0.045405 &$L^\infty_{flow}$ & 1.809\\
   [0.5ex]        

   \hline\hline                        
 $\P_2$, &${Pe_h} $-range & (5.1201, &655.378)  \\   \hline                   
   $L^2_{LS}$ &0.02198&$L^\infty_{LS}$ &   1.2736\\
   $L^2_C$ &0.029231&$L^\infty_C$ & 1.4014\\ 
   $L^2_{LSflow}$ &\textbf{0.021967}&$L^\infty_{LSflow}$ &  \textbf{1.2736}\\
   $L^2_H$ &0.029577&$L^\infty_H$ & 1.3835\\ 
   $L^2_{flow}$ &0.024069&$L^\infty_{flow}$ & 1.3925\\
  [0.5ex]       
   \hline\hline                        
  $\P_3$, &${Pe_h} $-range &(5.1201, &655.378)  \\   \hline                   
   $L^2_{LS}$ &0.014192&$L^\infty_{LS}$ & 1.3339\\
   $L^2_C$ &0.019374&$L^\infty_C$ & 1.5159\\ 
   $L^2_{LSflow}$ &\textbf{0.014183}&$L^\infty_{LSflow}$ & \textbf{1.3339}\\
   $L^2_H$ &0.019958&$L^\infty_H$ & 1.5224\\ 
   $L^2_{flow}$ &0.015532&$L^\infty_{flow}$ & 1.5034\\
  [0.5ex]        
 \hline     
   \end{tabular}
    \end{center}
      \caption{  \label{tab:table2}  {Test 2. Averaged errors w. r. t. the P\'eclet numbers in $L^2$ and $L^\infty$ norms for $\P_1$, $\P_2$ and $\P_3$ finite elements. The P\'eclet numbers with a maximum range between 5.1201 and 655.378 for three types of f. e.}}
\end{table}

 \begin{figure}[h!]
\begin{center}
\includegraphics[width=0.6\linewidth]{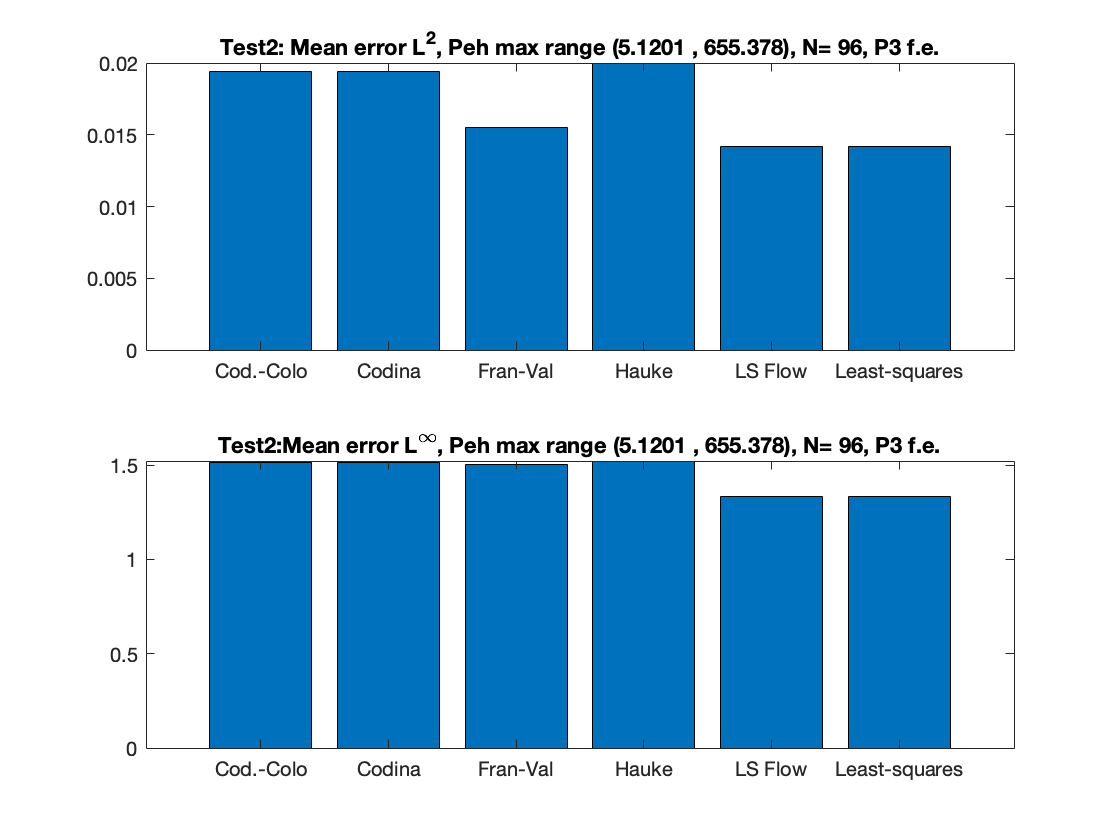}
\end{center}
\caption{\label{fig_errP3} Test 2. Errors for $\P_3$ finite elements with $N^*=4N$.}
\end{figure}

 {\color{black} In Fig. \ref{fig_errP3}, we represent  the mean errors in $L^2$ and $L^\infty$ norms for $\P_3$ f. e. for the same stabilized coefficients and data as in Table \ref{tab:table2}.  In Fig. \ref{fig_taus}, we represent in the case $N=96$ and $\mu=0.001$, the least-squares stabilized coefficients, panel (a) and the VMS-spectral stabilised coefficients, panel (b), for $\P_1$ f.e.. Finally, in Fig. \ref{fig_sol}, we represent, also for $N=96$ and $\mu=0.001$ the solution obtained by the least-squares stabilised coefficients for $\P_1$ f.e..     

\begin{figure}[ht!]
\begin{center}
\begin{tabular}{ll}(a)  Least-squares stabilised coefficients &(b) VMS-spectral stabilised coefficients\\
\includegraphics[width=0.5\linewidth]{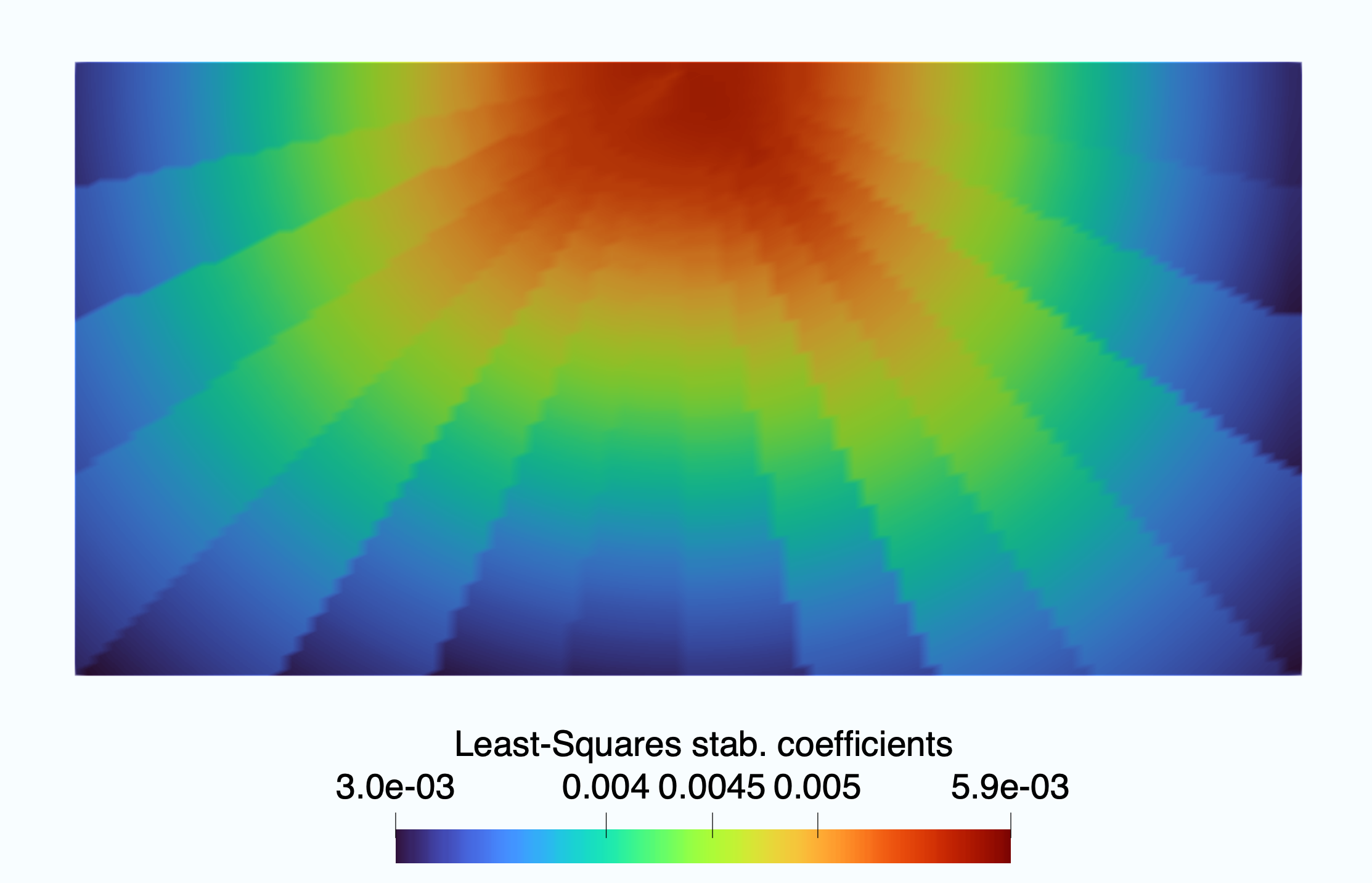}&\includegraphics[width=0.5\linewidth]{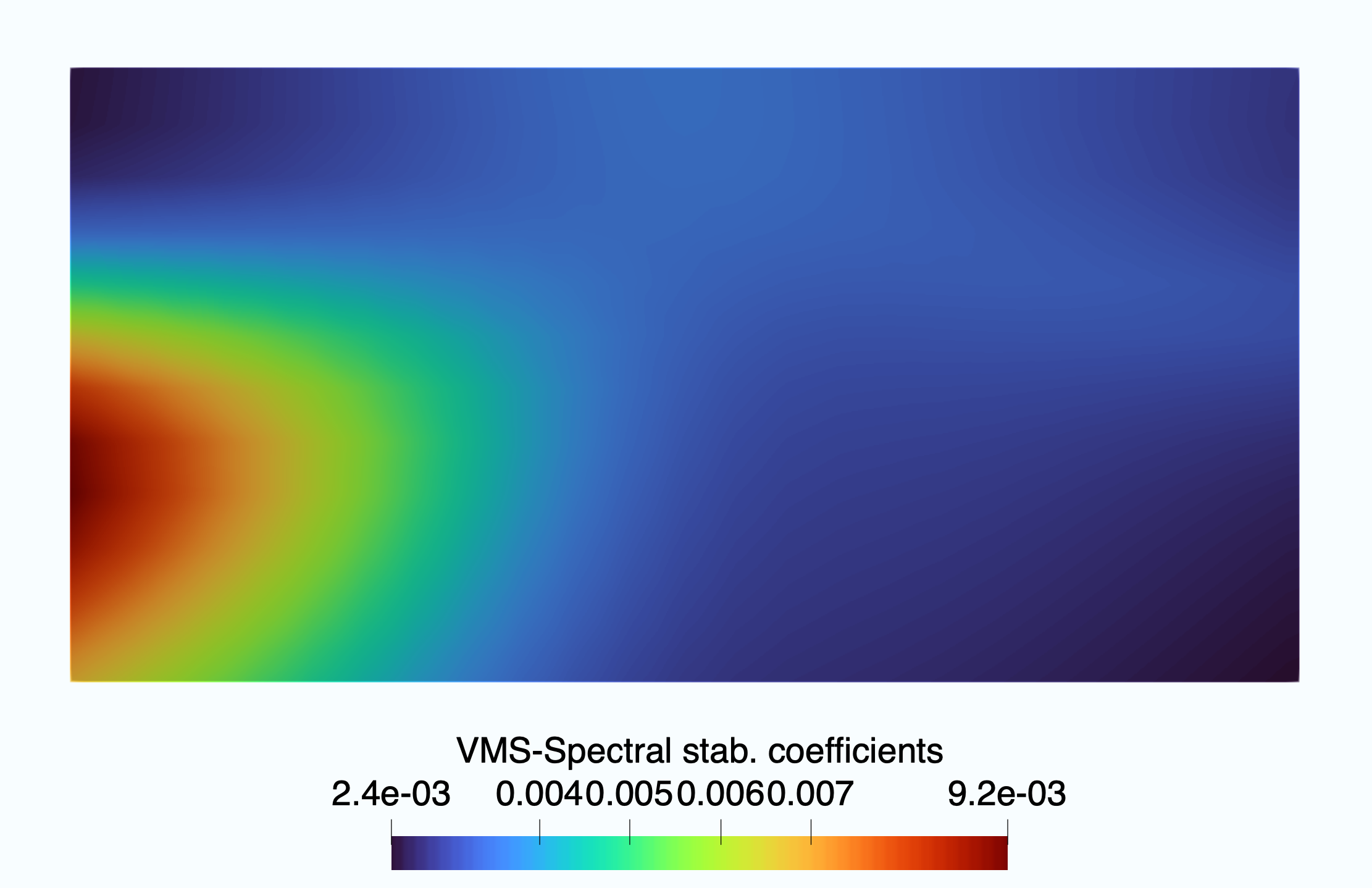}
\end{tabular}
\end{center}
\caption{\label{fig_taus} Test 2. Representation of the least-squares stabilised coefficients, (panel (a)) and the VMS-spectral stabilised coefficients, (panel (b)).}
\end{figure}
\begin{figure}[ht!]
\begin{center}
\includegraphics[width=0.7\linewidth]{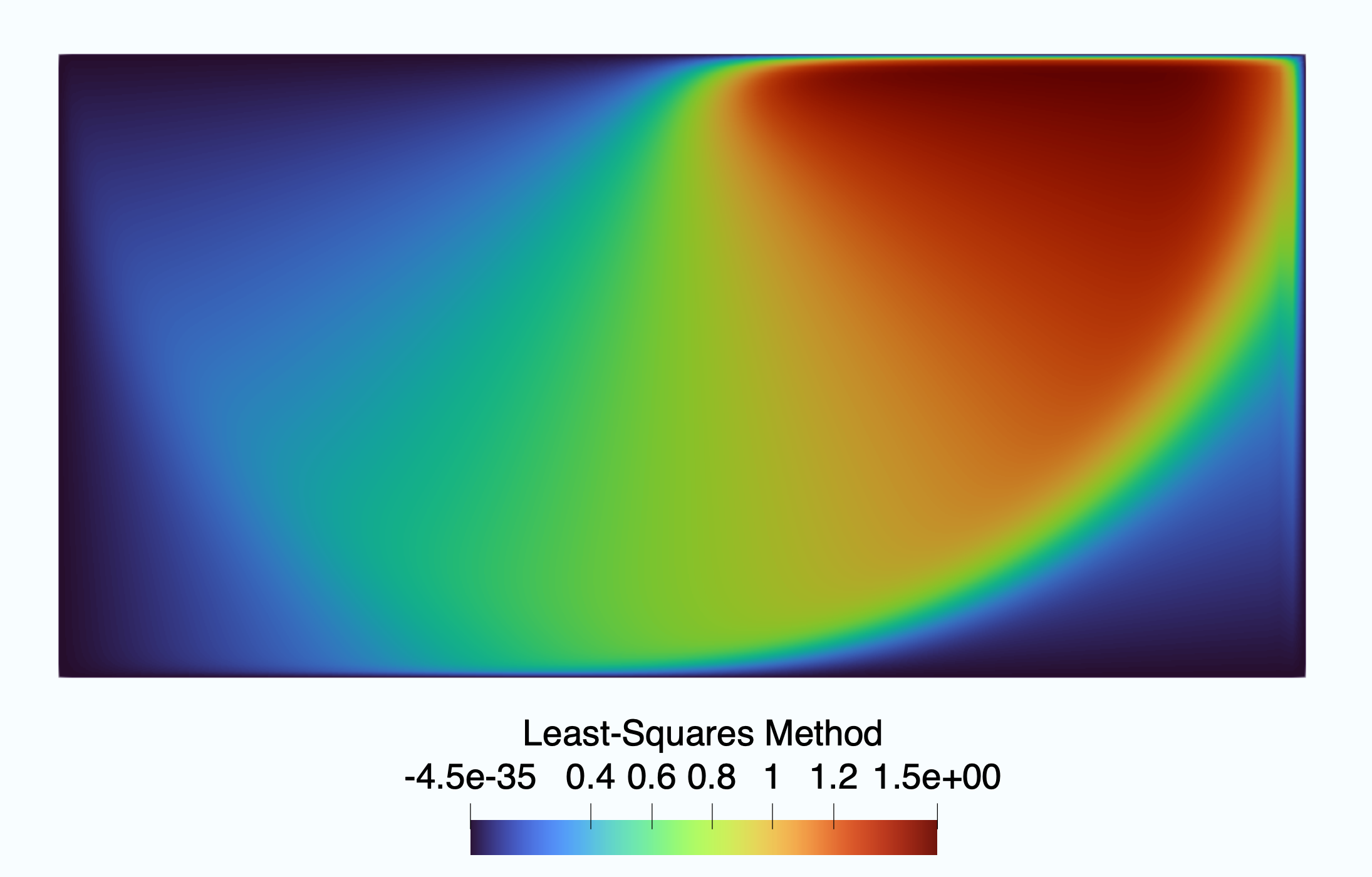} \\
\end{center}
\caption{\label{fig_sol} Test 2. Representation of the solution obtained with the least-squares stabilised coefficients. }
\end{figure}

 \subsubsection{Test 3: Advection-diffusion flow around a cylinder}
  The numerical tests considered until now have been carried on on structured meshes, for which the least-squares stabilised coefficients have been computed.  In this subsection, we check the reliability of the method in a more general case, with unstructured meshes.
  
In this test, we previously compute the steady state of a fluid with Reynolds (Re) number 100 around a cylinder. We use this velocity as the advection velocity $\mathbf{a}(x,y)$ to solve the advection-diffusion problem  (\ref{WEAD})  for a passive scalar.  
In Fig. \ref{fig_vel_cil}, we represent the velocity vector field $\mathbf{a}$.  We consider diffusion coefficients $\mu$ that vary with values $\mu=5.0e-06, \,7.5e-06,$ $\,1.0e-05,\cdots,0.0005$ and source term $f(x,y)=0$. 
\begin{figure}[ht!]
\begin{center}
\includegraphics[width=0.7\linewidth]{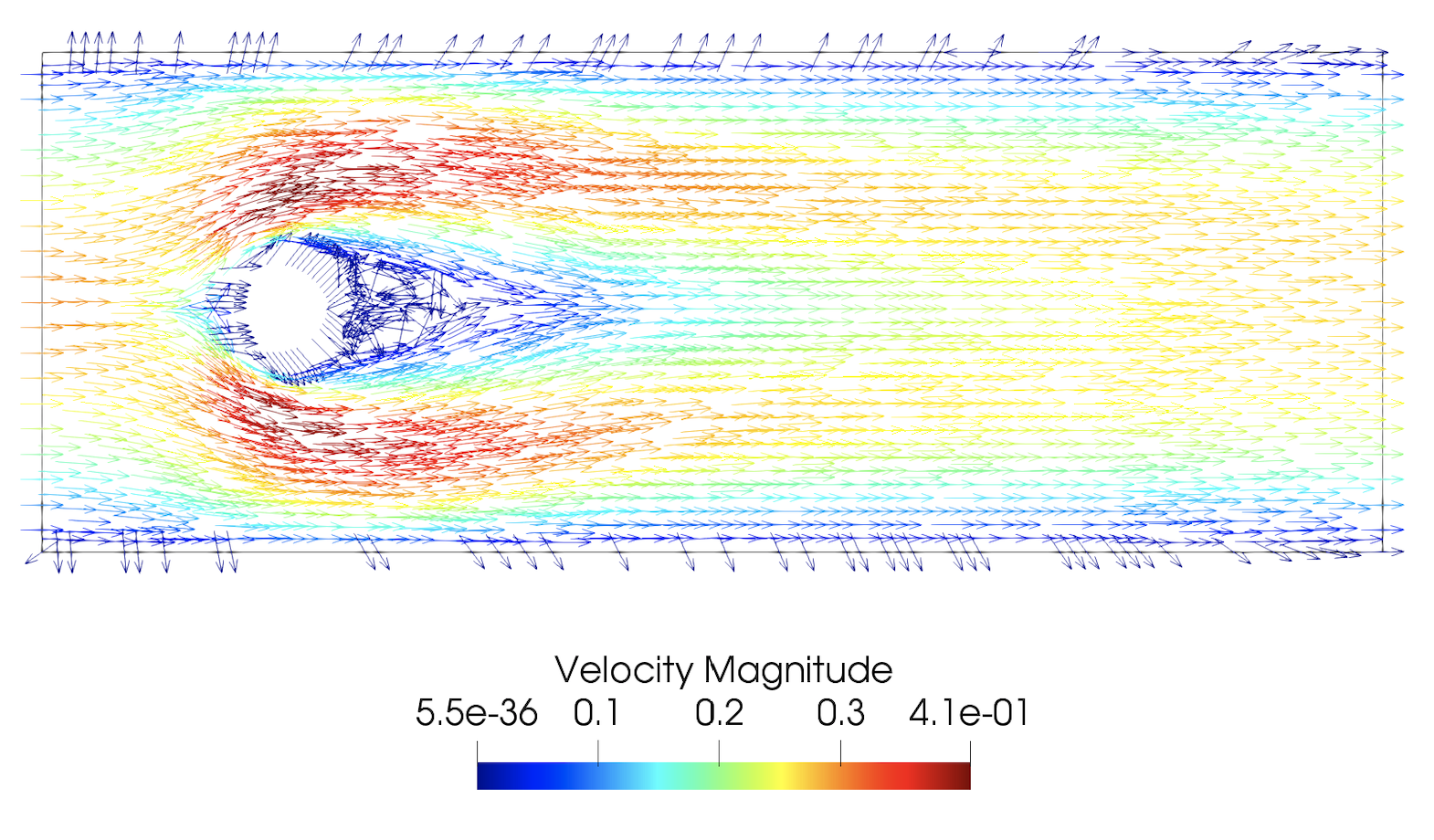}
\end{center}
\caption{\label{fig_vel_cil} Test 3. Representation of the velocity $\mathbf{a}(x,y)$.}
\end{figure}
 In Table \ref{tab:tablecilindro} we provide, for $\P_1$, $\P_2$ and $\P_3$ f.e., the mean errors in $L^2$ and $L^\infty$ norms, averaged with respect to  the P\'eclet numbers within the range 8.18483 to 818.483.  The errors have been computed by comparing the stabilised solution in this unstructured mesh with a reference solution of (\ref{WEAD}) obtained splitting the unstructured mesh by 4, for $\P_1$, $\P_2$ and $\P_3$ f. e..
In this test we compute the stabilised solution through the least-squares stabilised coefficients, and through  the stabilised coefficients given in \eqref{taucodina}, \eqref{tauhauke} and \eqref{tauflow}. We also use through the VMS-spectral stabilised coefficients, but only for $\P_1$ f. e..
\begin{table}[ht!]
  \begin{center}
   \begin{tabular}{c c c c c c}  
   \hline  \hline                        
   $\P_1$ f. e. &VMS-spectral &Codina  & Hauke & Franca-Valentin  & Least-squares \\    [0.5ex]
   \hline                   
   $L^2$ & 5.61e-03 &5.93e-03 & 5.82e-03 & \textbf{5.32e-03}  &  5.39e-03\\  
   $L^\infty$ &2.46e-01 & 2.85e-01& 2.56e-01 & \textbf{2.02e-01} & 2.41e-01\\ 
   [0.5ex]       
   \hline  
   \end{tabular}   
\medskip

   \begin{tabular}{c c c c c c}  
   \hline  \hline                        
  $\P_2$ f. e.  &Codina  & Hauke & Franca-Valentin  & Least-squares \\    [0.5ex]
   \hline                   
 $ L^2$ & 9.43e-04 & 9.52e-04 & 9.08e-04 &  \textbf{8.36e-04}\\  
   $L^\infty$  & 3.45e-02& 2.88e-02 & \textbf{2.64e-02} & 2.75e-02\\ 
   [0.5ex]       
   \hline     
   \end{tabular}
\medskip

   \begin{tabular}{ c c c c c}  
   \hline  \hline                        
   $\P_3$ f. e. &Codina  & Hauke & Franca-Valentin  & Least-squares\\    [0.5ex]
   \hline                   
 $ L^2$ &  3.07e-04 & 3.11e-04 & 3.04e-04  &  \textbf{2.86e-04}\\  
   $L^\infty$ &  1.59e-02& 1.25e-02 & \textbf{1.21e-02} & 1.23e-02\\ 
   [0.5ex]       
   \hline     
   \end{tabular}
    \end{center}
    \caption{\label{tab:tablecilindro} Test 3. Errors  in $L^2$ and $L^\infty$ norms with several stabilisation coefficients.} 
    \end{table}
 
{\color{black} From the results in Table \ref{tab:tablecilindro}, let us note that the best performance is provided by least-squares stabilisation coefficients in some cases and by Franca and Valentin coefficients in other cases, although in the latter cases with values very close.}

{\color{black}In Fig. \ref{fig_taus_cil}, we map, for $\mu=0.0005$ and $\P_1$ f. e., the stabilisation coefficients obtained with the least-squares stabilised coefficients, (panel (a)), VMS-spectral stabilised coefficients (panel (b)), Hauke stabilised coefficients(panel (c)) and Franca and Valentin stabilised coefficients (panel (d)). Note that the patterns of the values reached by the coefficients cases are similar in all four cases (higher values of the stabilised coefficients near the inflow and the outflow boundaries), although the ranges of these values are quite different.} 
\begin{figure}[ht!]
\begin{center}
\begin{tabular}{ll}(a) Least-squares stabilised coefficients &(b) Franca-Valentin stabilised coefficients\\ 
\includegraphics[width=0.5\linewidth]{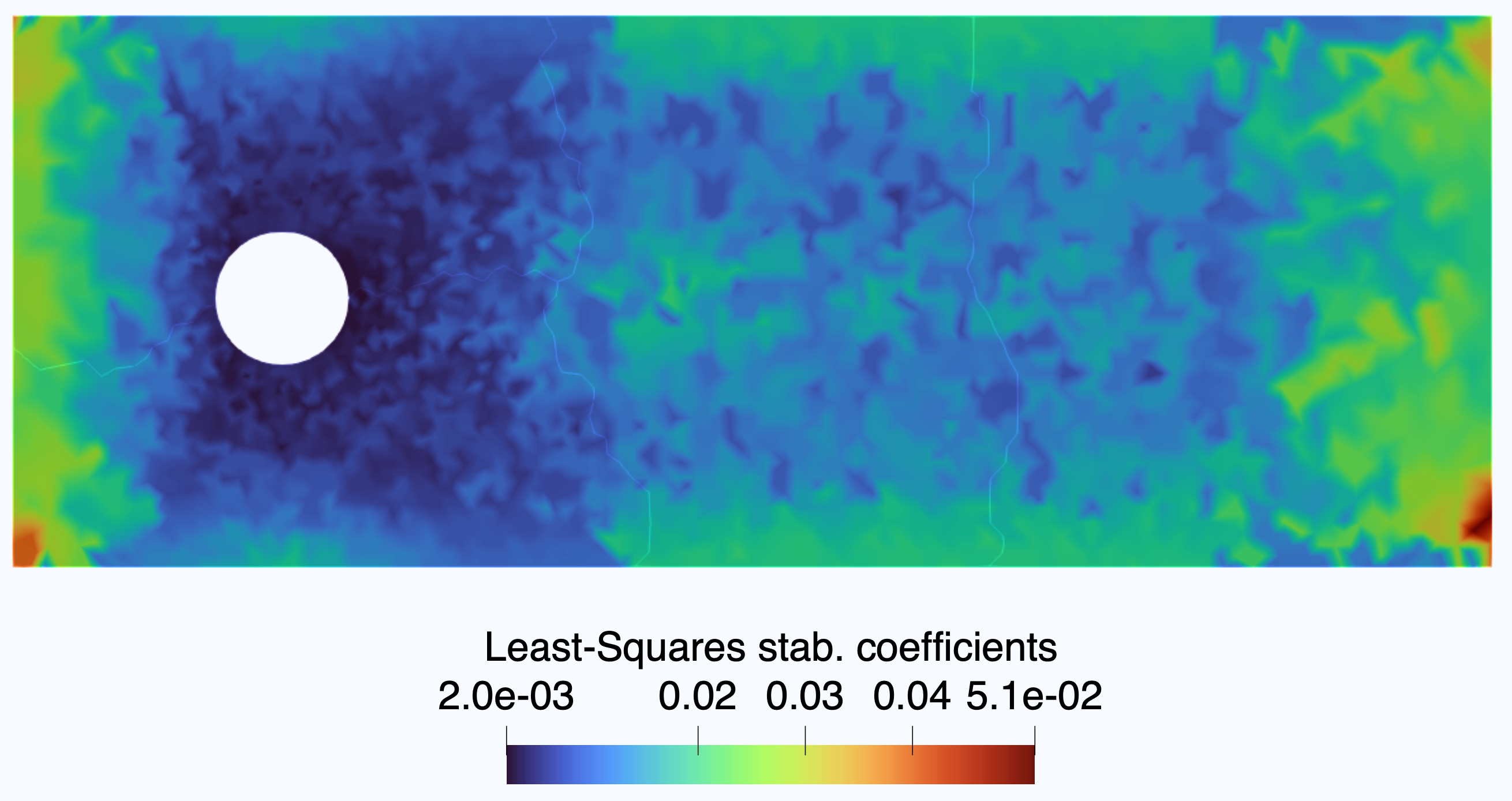}&\includegraphics[width=0.5\linewidth]{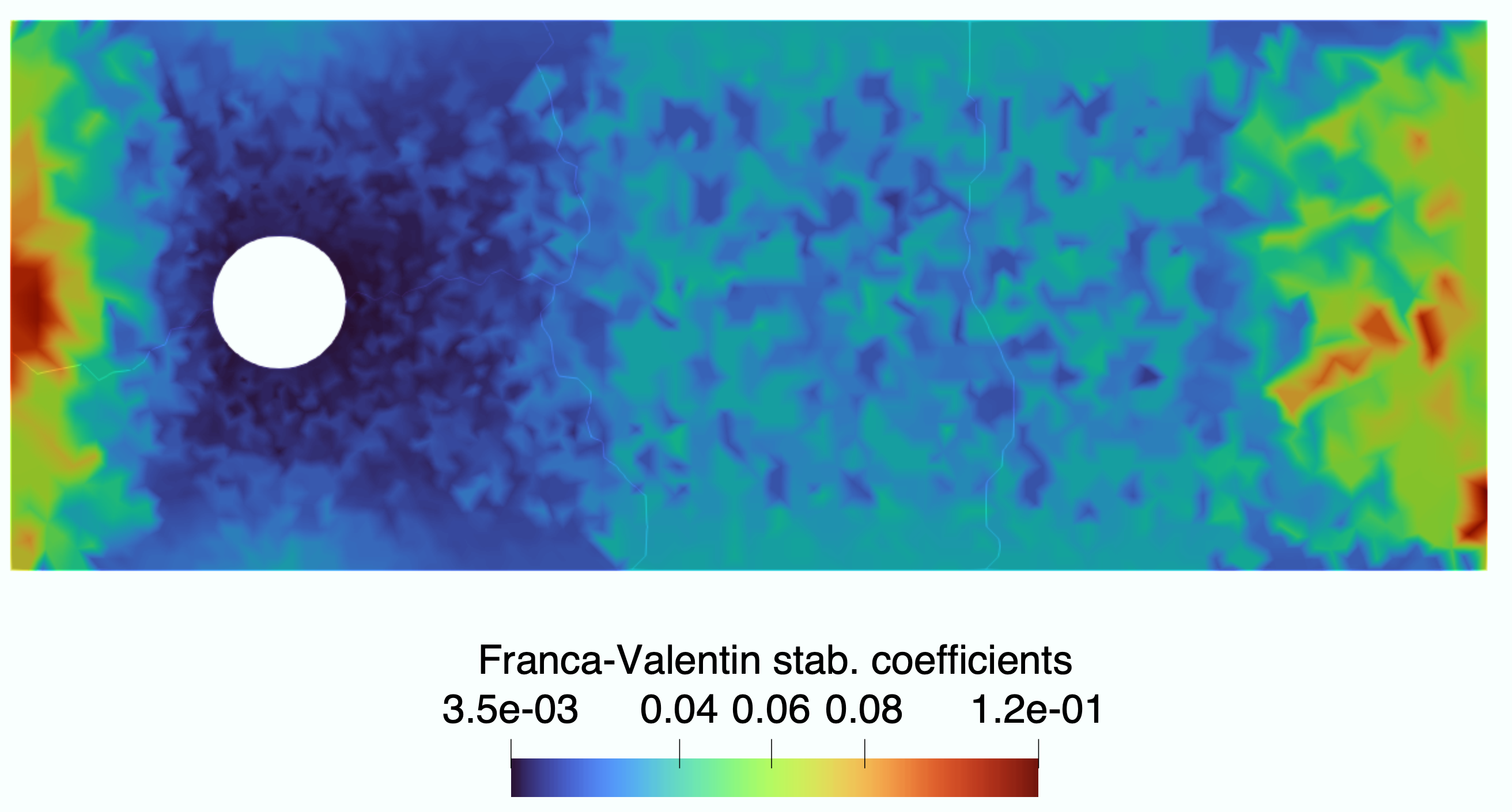}
\\
(c) VMS-Spectral stabilised coefficients &(d) Hauke stabilised coefficients\\
\includegraphics[width=0.5\linewidth]{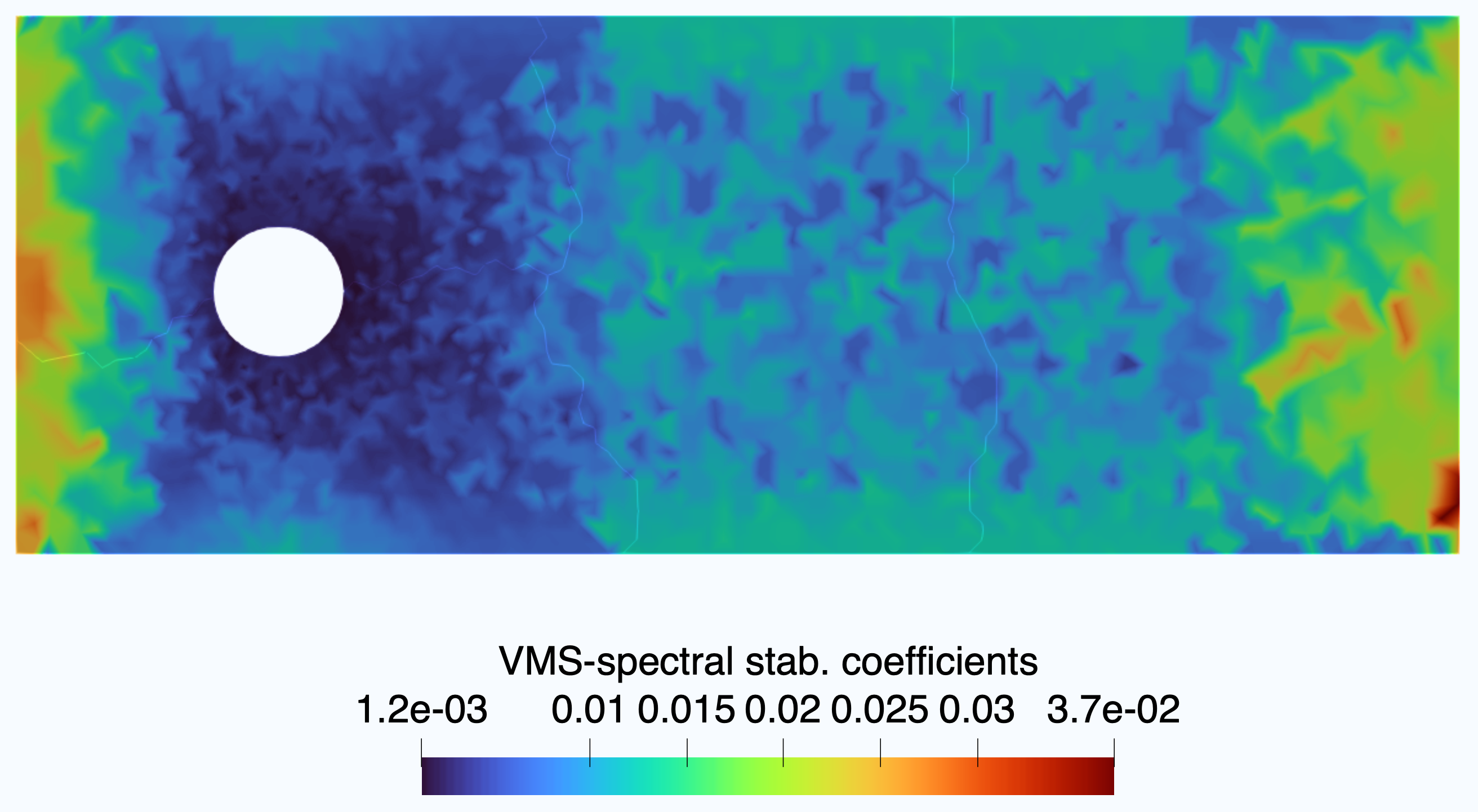}&\includegraphics[width=0.5\linewidth]{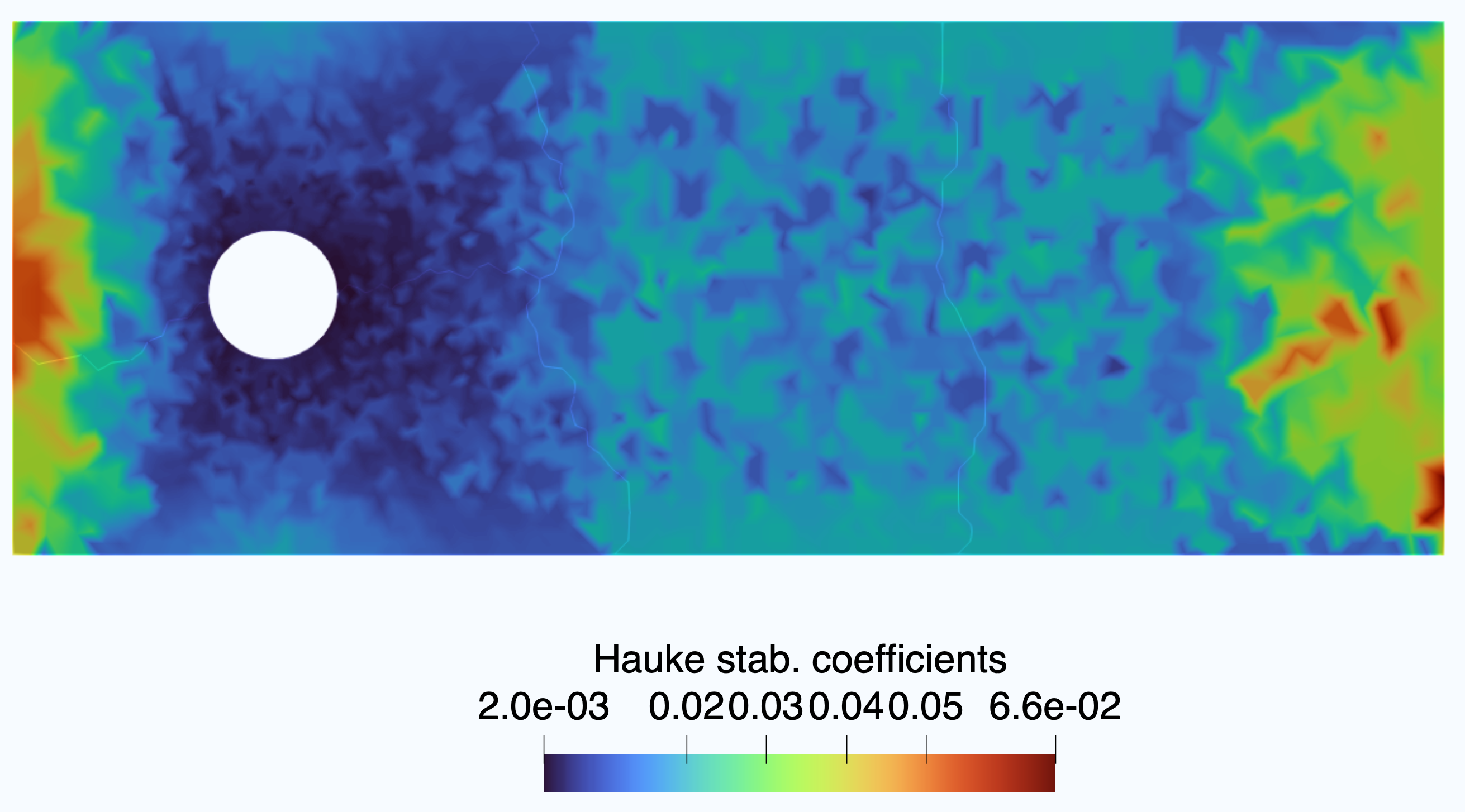}
\end{tabular}
\end{center}
\caption{\color{black}\label{fig_taus_cil} Test 3. Mappings of the stabilization coefficients: the least-squares stabilised coefficients, (panel (a)), {\color{black} Franca and Valentin stabilised coefficients (panel (b)), VMS-spectral stabilised coefficients (panel (c)) and  Hauke stabilised coefficients (panel (d)). Note that color scale in panels (a) and (b) is different from the one in (c) and (d).}}
\end{figure}

{\color{black} In Fig. \ref{fig_sol_LS}, we represent the solution obtained with the least-squares method for $\mu=0.0005$ and $\P_1$ f. e.. Finally, in Fig. \ref{fig_error_cil}, we represent the errors obtained with the least-squares stabilised coefficients , (panel (a)) and with Franca-Valentin stabilised coefficients, (panel (b)). The former are somewhat smaller, while the patterns of the values reached are similar but with some differences on the location of the highest values.}
\begin{figure}[ht!]
\begin{center}
\includegraphics[width=0.7\linewidth]{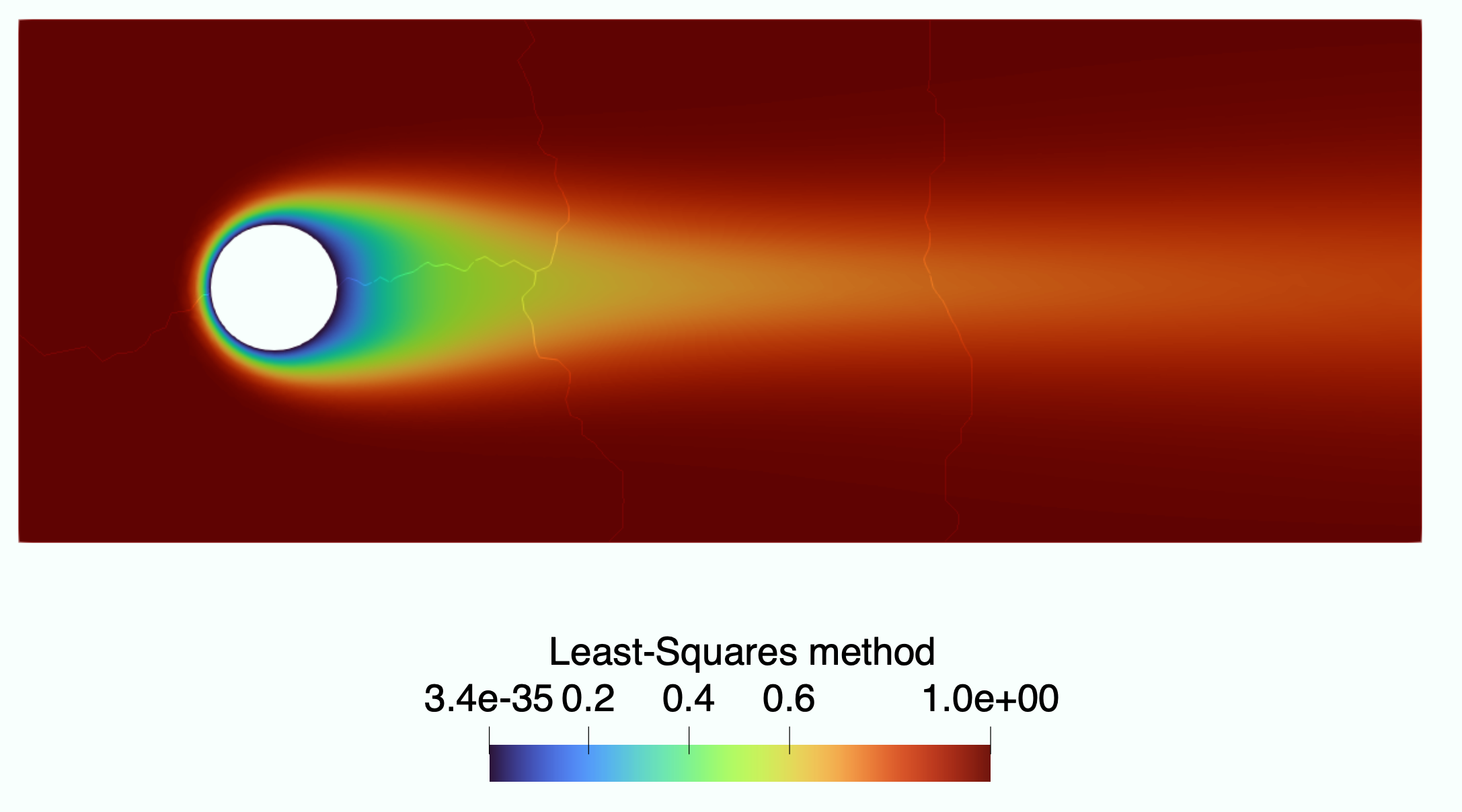}
\end{center}
\caption{\label{fig_sol_LS} Test 3. Representation of the solution obtained with the least-squares  method for $\mu=0.0005$ and $\P_1$ f. e..}
\end{figure}

\begin{figure}[ht!]
\begin{center}
\begin{tabular}{ll}(a) Error least-squares method&(b)  Franca-Valentin stabilised coefficients \\
\includegraphics[width=0.5\linewidth]{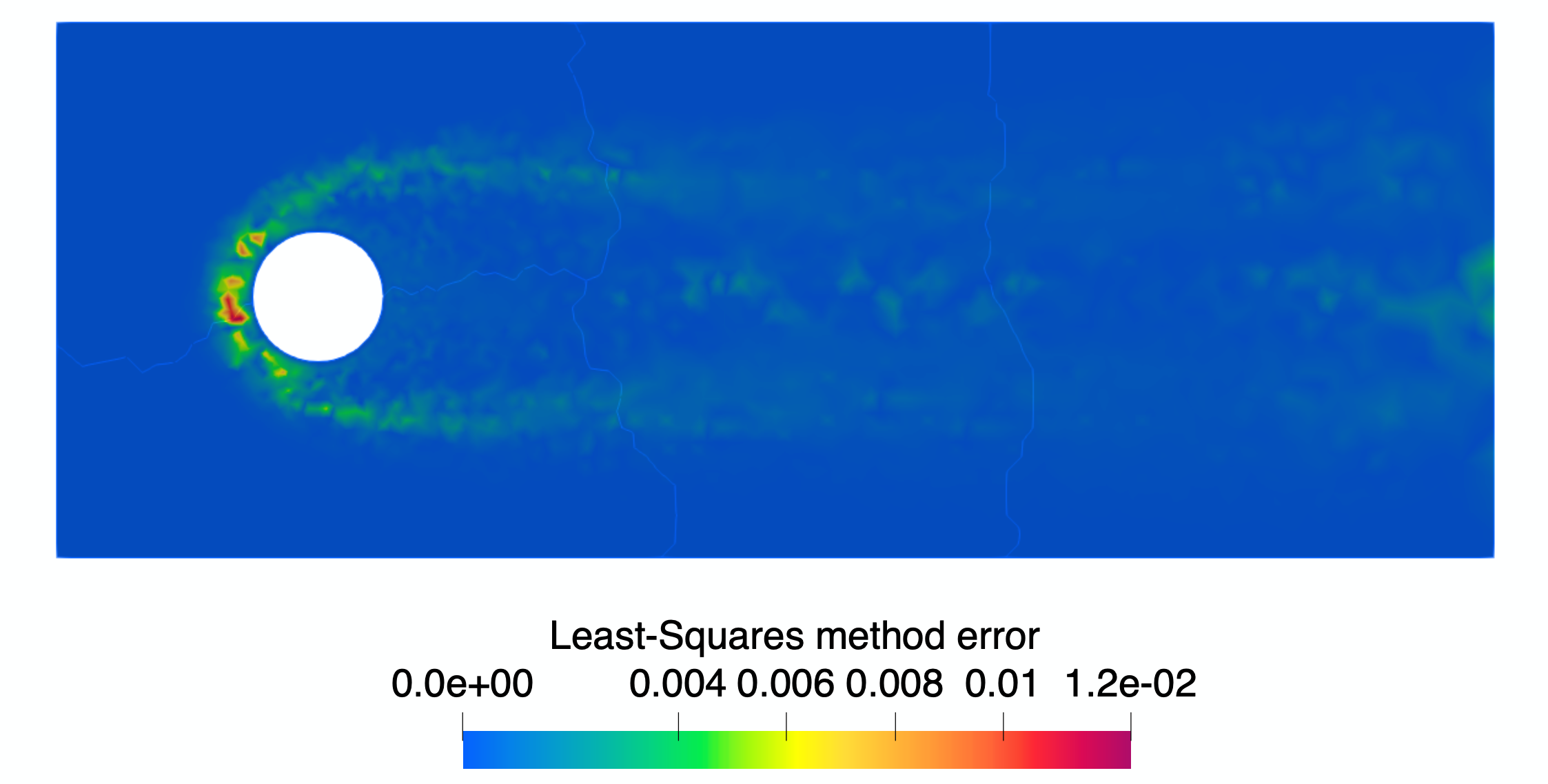}&\includegraphics[width=0.5\linewidth]{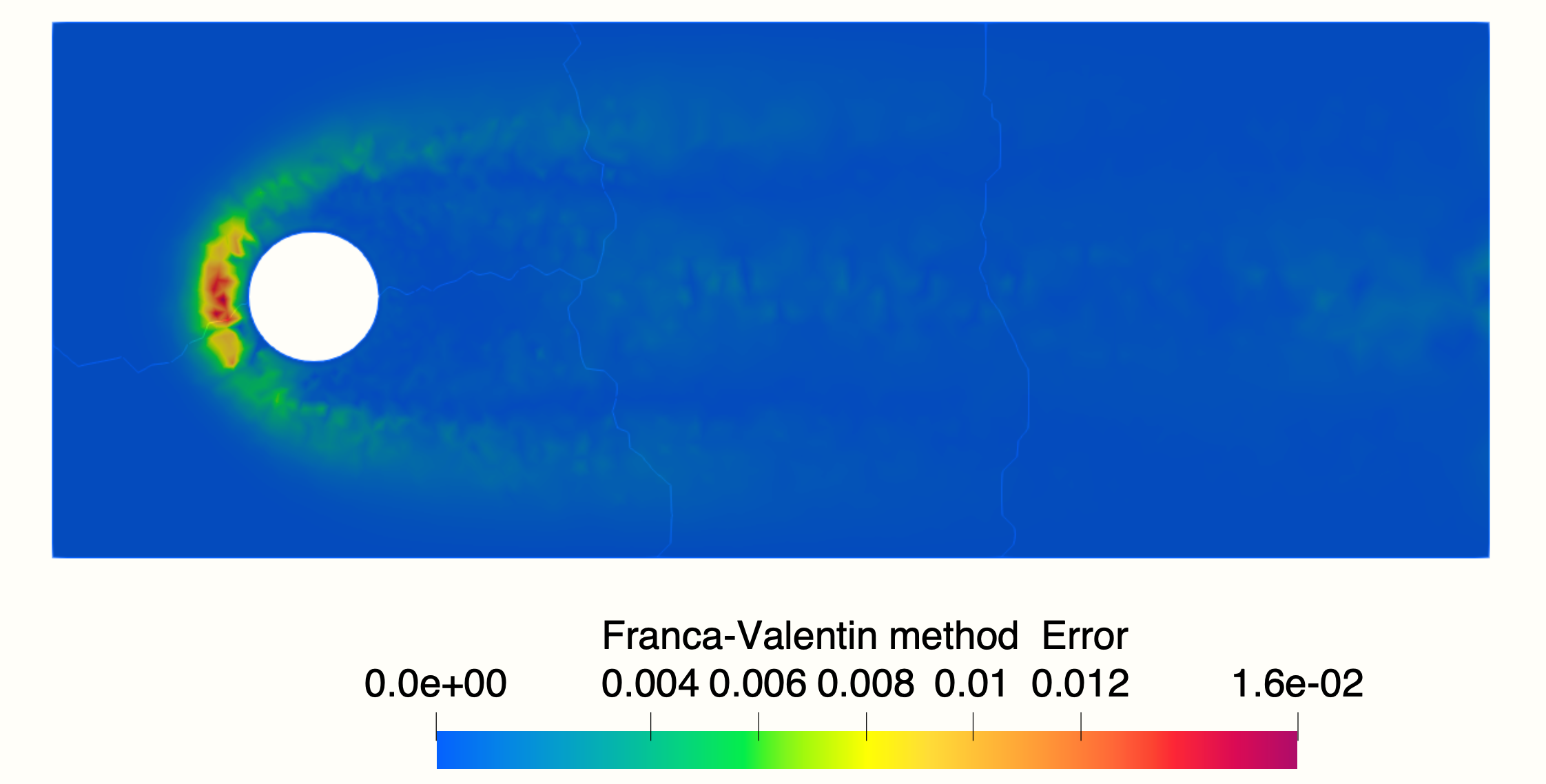}
\end{tabular}
\end{center}
\caption{\label{fig_error_cil} \color{black}Test 3. Mappings of the errors obtained with the VMS-spectral stabilised coefficients, (panel (a)) and with Codina's method of stabilised coefficients, (panel (b)). }
\end{figure}

 \subsection{Test 4: Lid-driven cavity flow}
 We test here the least-squares stabilised coefficients in a popular benchmark flow, the lid-driven cavity flow problem. 
In this test, the flow is modelled by the Navier-Stokes equations. It takes place in the unit square domain $\Omega=(0,1)\times(0,1)$ with Dirichlet boundary conditions,
\begin{equation}\label{NSE}
\left\{\begin{array}{l}
U\cdot\nabla U-\mu \Delta U+\nabla p=0\quad \mbox{in }\Omega,\\  \noalign{\smallskip}
U=0 \quad \mbox{on }\partial \Omega\setminus \gamma,\\ 
U=(1,0) \quad \mbox{on } \gamma,
\end{array}\right.
\end{equation}
where $\gamma$ is the top side of the unit square, $U$ is the velocity and $p$ is the pressure. This is a nonlinear problem that we solve through time stepping techniques to reach a stationary state. At each iteration step we consider  the linearised semi-discretisation in time
\begin{equation}\label{NSED}
\begin{array}{l}\displaystyle
\frac{U^{n+1}-U^n}{k}+ U^n\cdot\nabla U^{n+1}-\mu \Delta U^{n+1}+\nabla p^{n+1}=0,\quad \nabla \cdot U^n=0.
\end{array}
\end{equation}
where $k$ is the time step. In order to obtain inf-sup stable pairs of spaces,  we consider the finite element spaces $\mathbb{P}_1+$Bubble for the velocities and $\mathbb{P}_1$ for the pressure:
\begin{equation}\label{punobur}
X_h=[V_{0h}^{(1)} \oplus \mathbb{B}]^2, \,\,\mbox{with}\,\, \mathbb{B}=\{ v_h \in C^0(\bar{\Omega}) \,|\, {v_h}_{|_K} \in \mathbb{B}(K),\,\forall K\in{\cal T}_h,\,\, {v_h}_{|_{\partial \Omega}}=0\},
\end{equation}
where $\mathbb{B}(K)=\{\mathrm{span}(b_K),\,b_K=\lambda_{1K}\,\lambda_{2K}\,\lambda_{3K}\,\}$, $\lambda_{iK}$ being the barycentric coordinates of element $K \in {\cal T}_h$; and
$$
Q_h=\{ q_h \in V_h^{(1)}\,\,\mbox{such that}\,\, \int_\Omega q_h=0\},
$$
We either consider the Taylor-Hood finite element spaces,
$$
X_h=[V_{0h}^{(k)} ]^2, \quad Q_h=\{ q_h \in V_h^{(k-1)}\,\,\mbox{such that}\,\, \int_\Omega q_h=0\},
$$
for $k=2$ or $k=3$.
   We consider a steady-state-residual based least-squares stabilised discretisation of \eqref{NSED},\\
   
   Find $(U^{n+1}_h,p^{n+1}_h) \in X_h \times Q_h$ such that 
\begin{equation}\label{NSEDh}\nonumber
\begin{array}{c}
(U^{n+1}_h,V_h)+ k\, \left (U^n_h\cdot\nabla U^{n+1}_h,V_h\right )-k\,\mu \,(\nabla U^{n+1}_h, \nabla V_h)-k\, (p^{n+1}_h,\nabla \cdot V_h)+k \,(\nabla\cdot U^{n+1}_h,q_h) \\\displaystyle
+ k\,S_h(U_h^n;U^{n+1}_h,V_h)=(U^n_h,V_h),\qquad\quad\forall (V_h,q_h) \in X_h \times Q_h; \,\,\mbox{where}\end{array}
\end{equation} 
$$
S_h(U_h^n;U^{n+1}_h,V_h)=\sum_{K\in {\cal T}_h}\, \sum_{i=1}^2\,(\tau^{(K,n)}_i\,P_i(U_h^n;(U^{n+1}_h,p_h^{n+1})),P_i(U_h^n;(V_h,q_h))\, )_K,
$$
with
\begin{eqnarray*}
P(W;(U, p))&=& W\cdot\nabla U-\mu \Delta U+\nabla p; \end{eqnarray*}
and
$$
\tau^{(K,n)}_i= \tau_K(P_{i,K}^n), \,\, P_{i,K}^n=\frac{\|U_{i_{|_K}}^n\|_{0,K}\, h_K}{2\, \mu}.
$$
For the $\P_1$+Bubble finite element space $X_h$ defined in \eqref{punobur}, the stabilised coefficients have been computed following the procedure described in Section \ref{se:least} for the advection-diffusion equation, using the finite element space 
$$
V_h= V_{0h}^{(1)} \oplus \mathbb{B}.
$$

 The steady solution $(U_h,p_h)\in X_h \times Q_h$ eventually reached by this time stepping procedure satisfies $\forall (V_h,q_h) \in X_h \times Q_h$,
 \begin{equation}\label{NSEDh}\nonumber
\begin{array}{r}
 \left (U_h\cdot\nabla U_h,V_h\right )-\mu \,(\nabla U_h, \nabla V_h)- (p_h,\nabla \cdot V_h)+(\nabla\cdot U_h,q_h) + S_h(U_h;U_h,V_h)=0,
\end{array}
\end{equation} 
 By the standard theory of finite element approximation of Navier-Stokes equations, the sequence $(U_h,p_h)$ strongly converges to the solution $(U,p)$ of problem \eqref{NSE} in $[H^1(\Omega)]^2 \times L^2(\Omega)$, with  order $k$ in $h$ if $(U,p) $ belongs to $[H^{k+1}(\Omega)]^2 \times H^k(\Omega)$ (cf. \cite{Chacon}).
{\color{black}In Fig. \ref{fig_sol_lid}, we represent the exact solution in case $Re=1000$: velocity, (panel (a)) and pressure, (panel (b)).}

\begin{figure}[ht!] 
\begin{center}\color{black} 
\begin{tabular}{ll}(a) Velocity exact solution &(b)  Pressure exact solution\\
\includegraphics[width=0.5\linewidth]{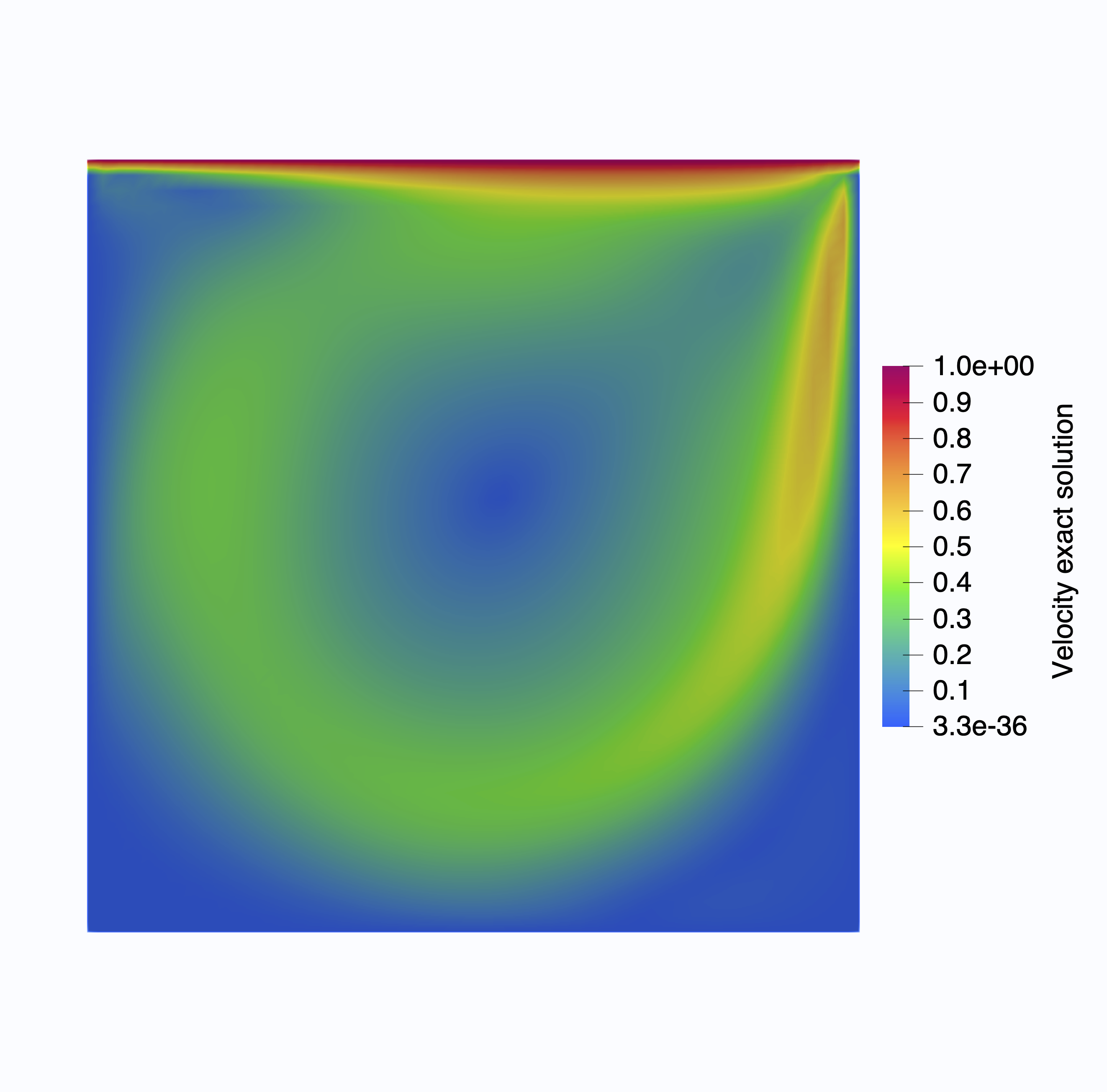}&\includegraphics[width=0.5\linewidth]{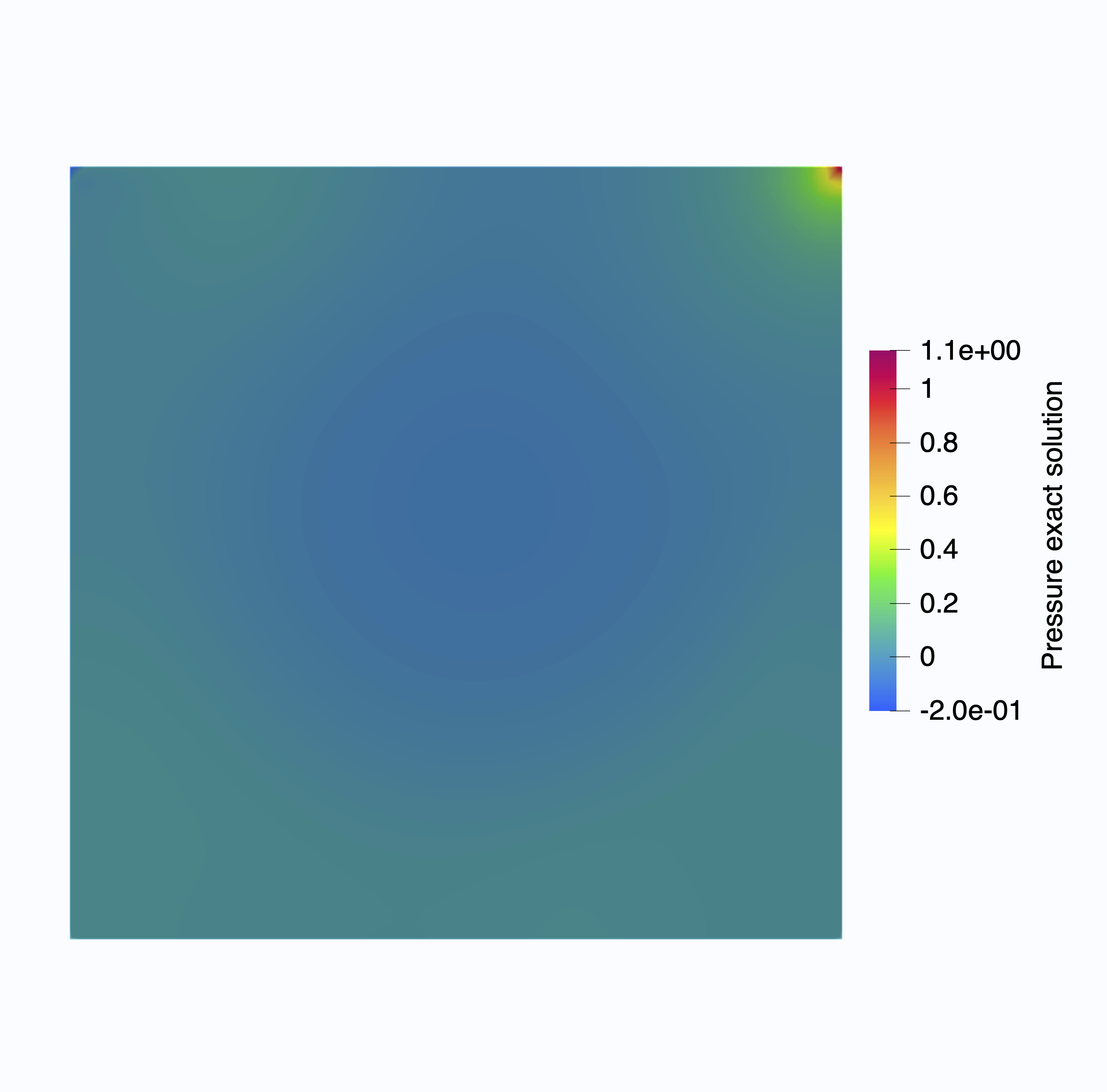}
\end{tabular}
\end{center}
\caption{\label{fig_sol_lid} \color{black} Test 4. Representation of the exact solution in case $Re=1000$: velocity, (panel (a)) and pressure, (panel (b)) with $\P_1-$bubble f. e. for velocity and $\P_1$ f. e. for pressure.} 
\end{figure}

 {\color{black} Table \ref{tab:tablecavidad} illustrates the obtained errors in $L^2$ norm for Reynold numbers $Re=1000$ and $Re=4000$, obtained from the quasi-stationary solution. These errors have been computed taking as reference solution the $(\P_1+\mbox{ Bubble},\P_1)$ Galerkin solution, the  $(\P_{2},\,\P_1)$ Galerkin solution, in a refined mesh of  40401 nodes, and the $(\P_{3},\,\P_2)$ Galerkin solution in a refined mesh of  32761 nodes.  The smallest errors are provided by varying stabilised coefficients, followed by the least-squares coefficients in all cases, with rather close error levels.  
 
  \begin{table}
 {\color{black}
  \begin{center}
   \begin{tabular}{l c c c c c c c}  
   \hline\hline                        
   $(\P_{1b},\,\P_1)$ & Re&${Re_h}-$range &$L^2_{LS}$ & $L^2_C$ & $L^2_H$ & $L^2_{flow}$  & $L^2_{VMS}$\\    [0.5ex]
   \hline                   
    & 1000 &(5.2e-05,12.80) & 0.279517 & 0.320112  & 0.289896 & 0.270412 &  \textbf{0.270231}\\ 
   & 4000 &(2.7e-03,47.47) &  0.370433 & 0.389394 &  0.388864 & \textbf{0.332643}  & 0.381904\\ 
   [0.5ex]        
\hline     
\hline\hline                        
     $(\P_{2},\,\P_1)$& Re&${Re_h}-$range &$L^2_{LS}$ & $L^2_C$ & $L^2_H$ & $L^2_{flow}$  \\    [0.5ex]
   \hline                   
    & 1000 &(9.3e-05,12.82) & 0.134368 & 0.135876  & \textbf{0.134272}& 0.134609 \\ 
   & 4000 &(8.7e-04,48.54) &  0.190527 & 0.192603 & 0.191508 & \textbf{0.189216}  \\ 
   [0.5ex]        
   \hline     
   \hline\hline                        
    $(\P_{3},\,\P_2)$ & Re&${Re_h}-$range &$L^2_{LS}$ & $L^2_C$ & $L^2_H$ & $L^2_{flow}$  \\    [0.5ex]
   \hline                   
    &1000 &(9.3e-05,12.82) & 0.122369 & 0.122248  &  0.122403& \textbf{0.12217} \\
   & 4000 &(6.9e-04,50.78) &  0.169775 & 0.170251 & 0.169832 & \textbf{0.16922}  \\ 
   [0.5ex]        
   \hline     
   \end{tabular}
    \end{center}
    \caption{\label{tab:tablecavidad} Test 4. Ranges of values of the grid Reynolds number ${Re_h}$, errors in $L^2$ norms of the stabilised solution obtained by the least-squares stabilised coefficients (with subindex $LS$), through Codina coefficients (with subindex $C$), through Hauke coefficients (with subindex $H$) and through Franca-Valentin coefficients (with subindex $flow$), for Reynolds numbers 1000 an 4000.
}
}
    \end{table}

{\color{black} In subsequent figures, we represent the results corresponding to $Re=1000$ and $(\P_{1b},\,\P_1)$ f. e.. In Fig. \ref{fig_taus_lid}, we represent the stabilisation coefficients obtained with the least-squares stabilised coefficients. Finally, in Fig. \ref{fig_error_lid}, we represent the errors obtained in the velocity in case $Re=1000$ with the  least-squares stabilised coefficients, (panel (a)), with Franca-Valentin stabilised coefficients, (panel (b)), with VMS-spectral  stabilised coefficients (panel (c)) and with Hauke stabilised coefficients (panel (d)). We observe quite similar patterns for all of them, with larger values in the flow areas with highest gradientes.}
%

\begin{figure}[ht!] \color{black} 
\begin{center}
\begin{tabular}{ll}
\includegraphics[width=0.5\linewidth]{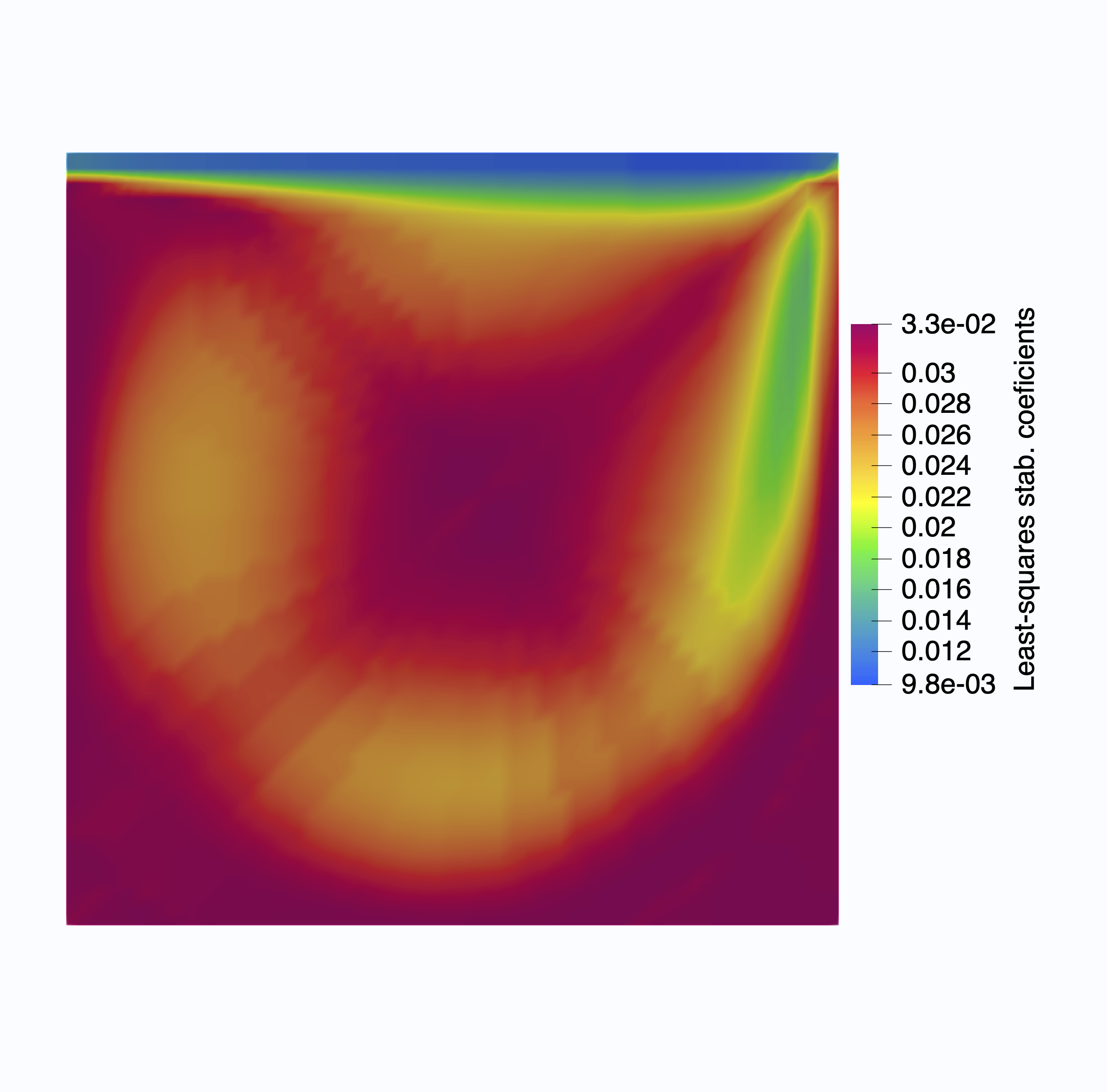}
\end{tabular}
\end{center}
\caption{\label{fig_taus_lid}\color{black} Test 4. Mapping of the least-squares stabilisation coefficients. }
\end{figure}

\begin{figure}[ht!]
\begin{center}\color{black} 
\begin{tabular}{ll}(a)   Error Least-squares &(b)  Error Franca-Valentin\\
\includegraphics[width=0.5\linewidth]{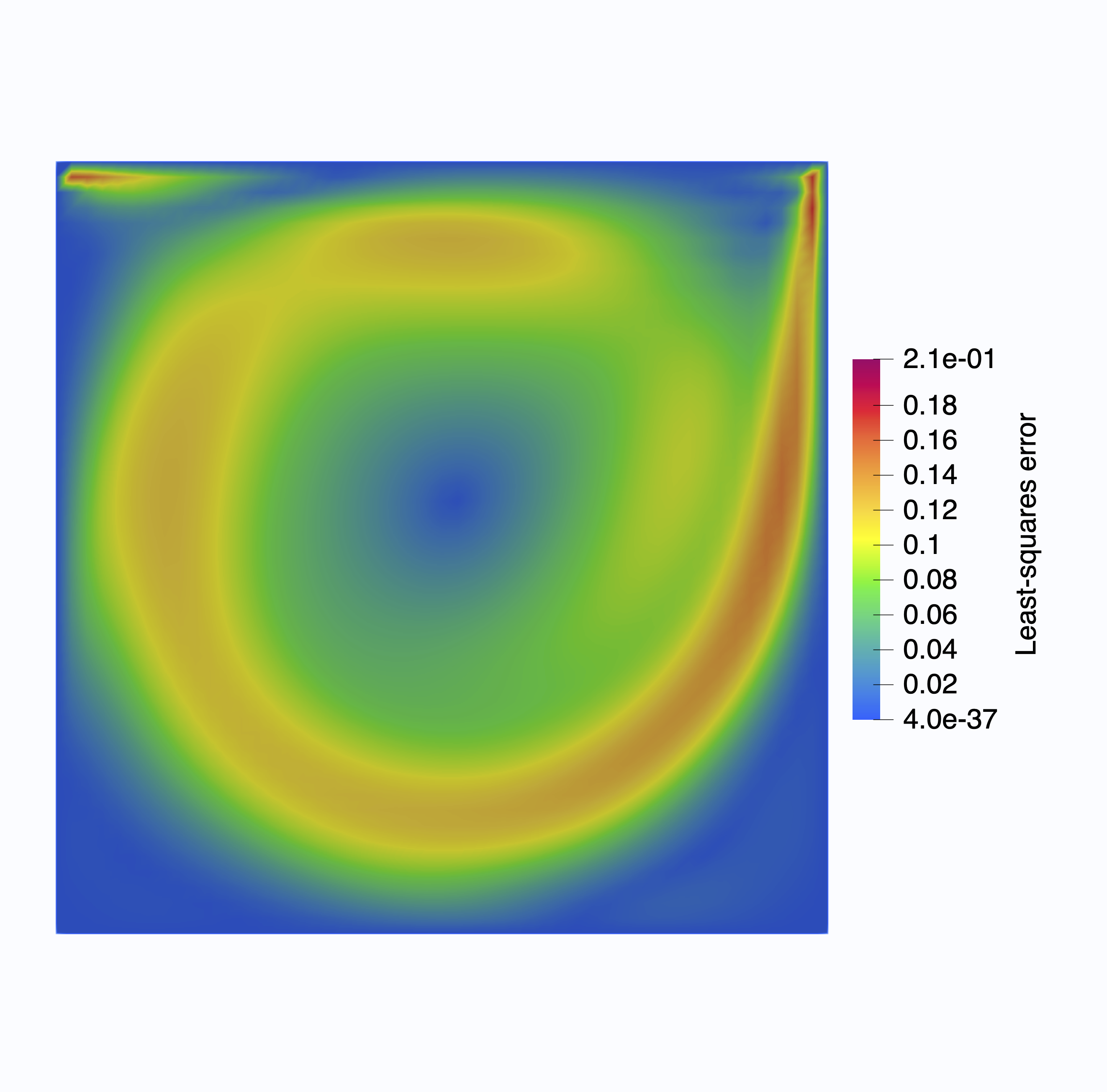}&\includegraphics[width=0.5\linewidth]{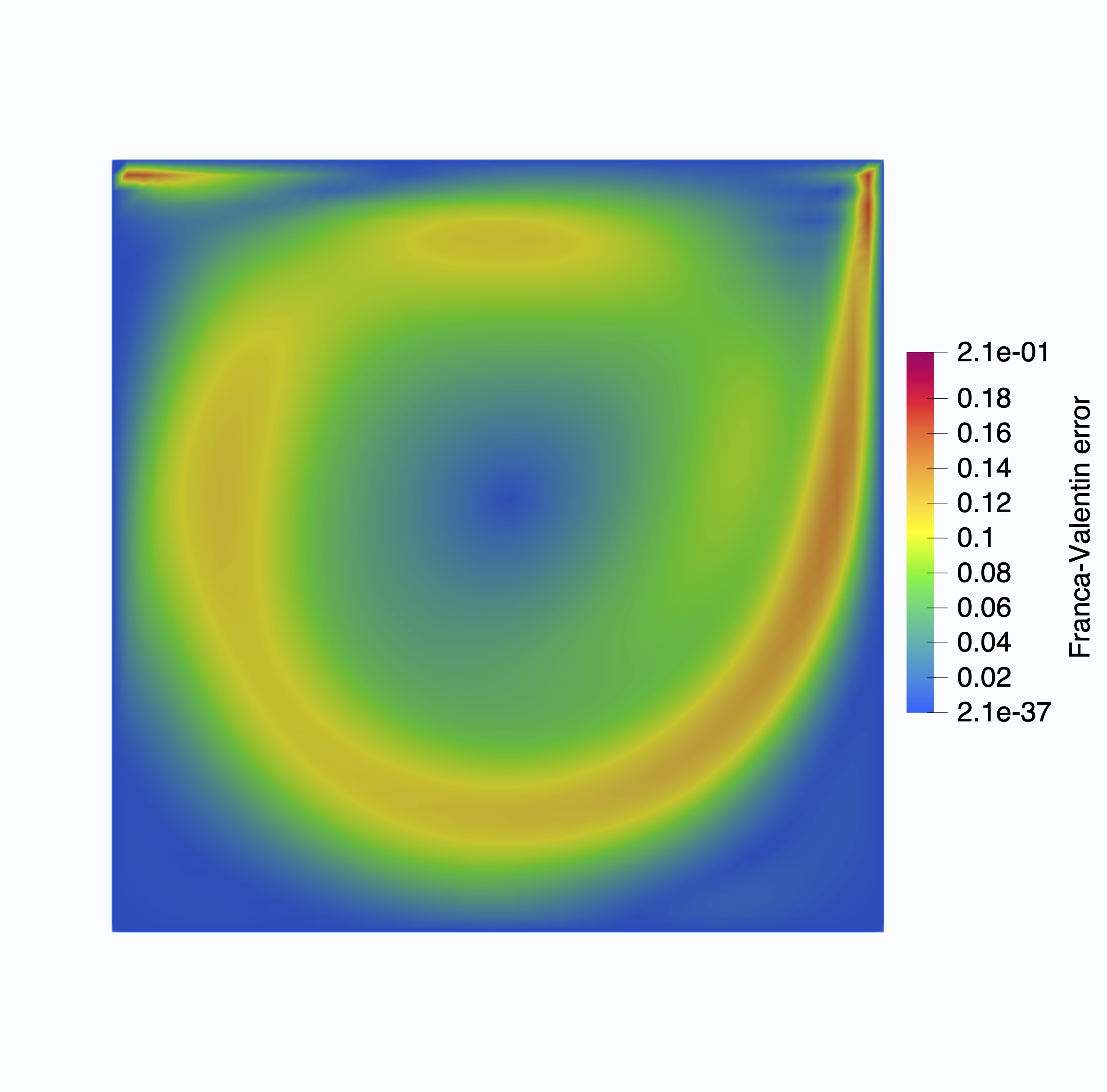}\\
{(c)  Error VMS-spectral } &{(d)  Error Hauke}\\
\includegraphics[width=0.5\linewidth]{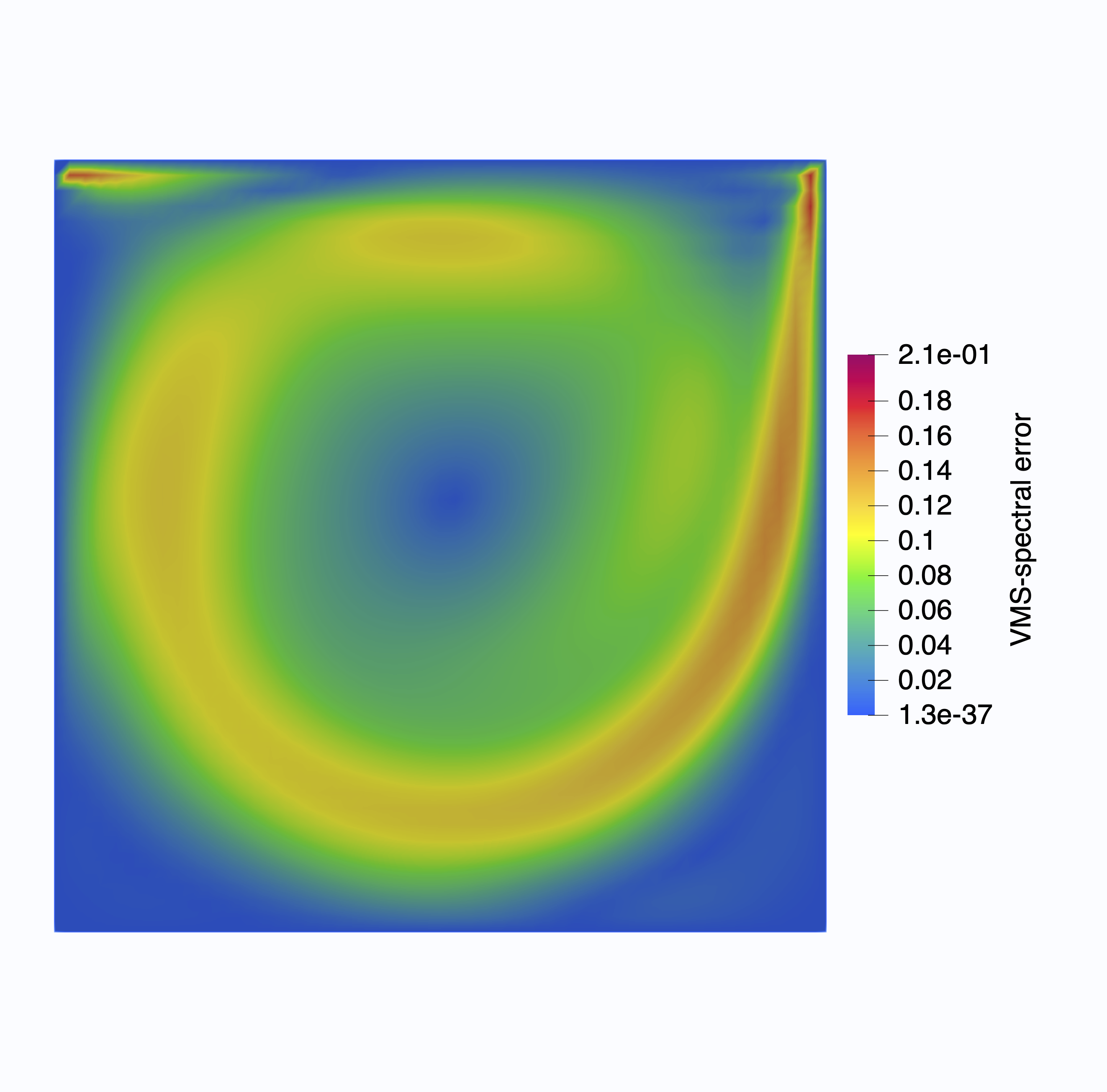}&\includegraphics[width=0.5\linewidth]{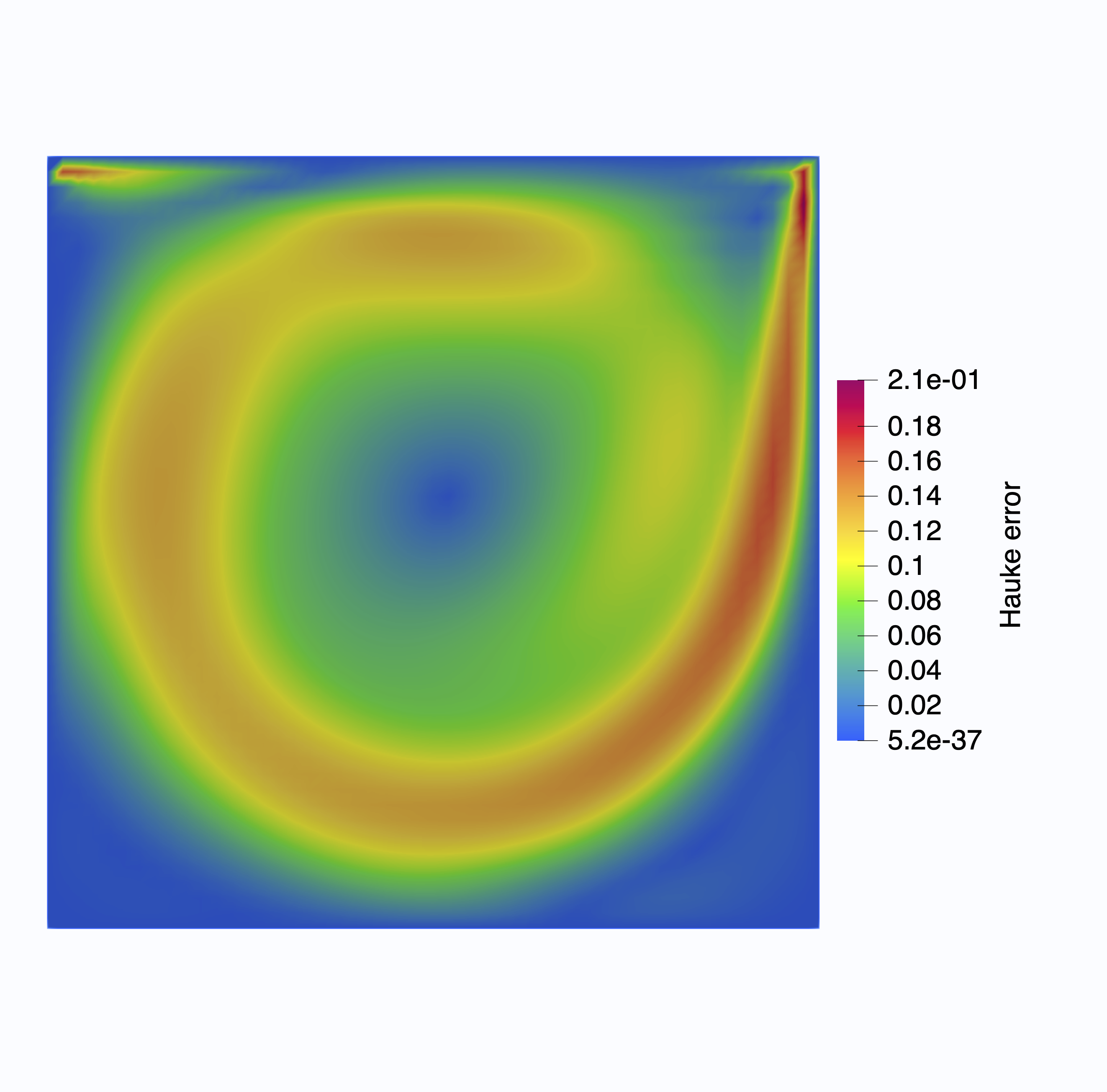}
\end{tabular}
\end{center}
\caption{\label{fig_error_lid} \color{black} Test 4. Mappings of the errors obtained in the velocity solution in case $Re=1000$ with the least-squares stabilised coefficients, (panel (a)), with Franca-Valentin stabilised coefficients, (panel (b)), with VMS-spectral stabilised coefficients (panel (c)) and with Hauke stabilised coefficients (panel (d)).}
\end{figure}
 
\section{Conclusions and perspectives}\label{se:conclusions}
In this paper we have assessed the solution of incompressible flow equations by means of stabilised methods, by introducing least-squares computed stabilised coefficients. We have stated that these can be efficiently computed as minima of smooth convex functionals. We have also introduced a data-driven off-line/on-line strategy to compute them in the flow simulation process with low computational cost. In the off-line strategy the stabilised coefficients are computed as functions of the non-dimensional parameters that govern the flow at grid element level.

We have compared the errors provided by the least-squares stabilised coefficients to those provided by several previously established stabilised coefficients with several advection-diffusion and Navier-Stokes flows, considering isotropic and anisotropic advection velocities, as well as isotropic and anisotropic grids, and $\P_1$, $\P_1$+Bubble, $\P_2$ and $\P_3$ finite elements. 

We observe that in all tested flows the least-squares stabilised coefficients provide nearly the smallest errors, in any case staying very close to the smallest ones.

In addition, the least-squares procedure to compute the stabilised coefficients has the advantage to apply to any finite element or finite volume discretisation, as well as to more general (compressible, multi-phase, thermal, ...) flows.  In despite of its need of a rather large amount of computation in the off-line procedure, it is thus a rewarding procedure, worth to be applied to general stabilised solutions of flow problems.
\section*{Acknowledgements}
This research is partially supported by Junta de Andaluc\'{\i}a - FEDER Fund Programa Operativo FEDER Andaluc\'{\i}a 2014-2020 grant US-1254587.

\end{document}